\newtheorem{theorem}{Theorem}[section]
\newtheorem{lemma}[theorem]{Lemma}
\newtheorem{cor}[theorem]{Corollary}
\newtheorem{Theorem}{Theorem}
\begin{document}
\title
{\LARGE \textbf{Two identities of hook immanantal polynomials of (di)graphs and their applications\thanks{ supported by NSFC (No. 12261071)and NSF of Qinghai Province (No. 2020-ZJ-920).} }}

\author{ Tingzeng Wu$^{a,b}$\thanks{{Corresponding author.\newline
\emph{E-mail address}: mathtzwu@163.com, wwzhang863@163.com, hjlai@math.wvu.edu
}}, Wenwei Zhang$^a$, Hong-Jian Lai$^{c,d}$\\
{\small $^{a}$ School of Mathematics and Statistics, Qinghai Nationalities University, }\\
{\small  Xining, Qinghai 810007, P.R.~China} \\
{\small $^{b}$ Qinghai Institute of Applied Mathematics,   Xining, Qinghai, 810007, P.R.~China} \\
{\small $^{c}$ School of Mathematics and System Sciences, Guangdong Polytechnic Normal University, }\\
{\small Guangzhou 510665, China }\\
{\small $^{d}$ Department of Mathematics, West Virginia University, Morgantown, WV, USA }}
\date{}

\maketitle
\noindent {\bf Abstract:} Given a graph or a digraph, let $M$ be  a combinatorial matrix (adjacency matrix and (signless) Laplacian matrix) of the (di)graph. The polynomial $\Phi_{k}^{M}(M,x) =\Phi_{(k,1^{n-k})}^{M}(M,x) = d_{(k,1^{n-k})}(xI-M)$ is called the hook immanantal polynomial of the (di)graph. This article establishes two identities: one is between the hook immanantal polynomial of a digraph and its derivatives, and the hook immanantal polynomials of all subgraphs obtained by deleting one arc from the digraph; the other is between the hook immanantal polynomial of a graph and its derivatives, and the hook immanantal polynomial of all subgraphs obtained by deleting one edge and its endpoints from the graph.
As applications, we show that the hook immanantal polynomials of combinatorial matrices of digraphs can be reconstructed from the  hook immanantal polynomials of combinatorial matrices of their subdigraphs. We also show that the hook immanantal polynomial of adjacency matirx of a graph can be reconstructed from the  hook immanantal polynomials of adjacency matrices of subgraphs of the graph.\\

\noindent {\bf Keywords:}  Hook immanant; Graph matrix; Ulam-Kelly conjecture; Edge reconstruction; Vertex reconstruction \\
\noindent {\bf AMS subject classifications:} 05C20; 05C31; 05C60; 05E05; 15A15
\section{Introduction}
Let $G = (V(G), E(G))$ be a simple graph with vertex set $V(G) = \{v_{1}, v_{2}, \ldots, v_{n}\}$ and edge set $E(G) = \{e_{1}, e_{2}, \ldots, e_{m}\}$. The number of vertices and edges of $G$ denoted by $|V(G)|=n$ and $|E(G)|=m$, respectively. Let $e=v_sv_t$ be an edge of $G$, we may obtain a graph on $m-1$ edges by deleting $v_sv_t$ from $G$ but leaving the vertices and the remaining eages intact. The resulting graph is denoted by $G-v_sv_t$. Similarly, if $\{v_s,v_t|v_sv_t\in E(G)\}$ are two vertices of $G$, we may obtain a graph on $n-2$ vertices by deleting from $G$ the vertices in $\{v_s,v_t\}$ together with all the edges incident with a vertex in $\{v_s,v_t\}$. The resulting graph is denoted by $G-v_s-v_t$.
The degree matrix of $G$ is denoted by $D(G)=diag(d_{1}, d_{2}, \ldots, d_{n})$, where $d_{i}$ is the degree of vertex $v_{i}$ in $G$. The adjacency matrix of $G$ is an $n$ by $n$ matrix, denoted $A(G)= (a_{ij})$ whose entry $a_{ij}$ is given by
$$
a_{ij} =
\begin{cases}
1, & \text{if $v_i$ and $v_j$ are adjacent},\\
0, & \text{otherwise}.
\end{cases}
$$
Then $L(G)=D(G)-A(G)$ and $Q(G)=D(G)+A(G)$ are the Laplacian and signless Laplacian matrices of graph $G$, respectively.

Let $\overrightarrow{G} = (V(\overrightarrow{G}), E(\overrightarrow{G}))$ be a digraph  with no loops or parallel arcs, where the vertex set $V(\overrightarrow{G}) = \{v_{1}, v_{2}, \ldots, v_{n}\}$ and the arc set $E(\overrightarrow{G}) = \{\overrightarrow{e_{1}}, \overrightarrow{e_{2}}, \ldots, \overrightarrow{e_{m}}\}$. The number of vertices and edges of $\overrightarrow{G}$ denoted by $|V(\overrightarrow{G})|=n$ and $|E(\overrightarrow{G})|=m$, respectively. If $\overrightarrow{e}$ is an arc of $\overrightarrow{G}$, we may obtain a graph on $m-1$ arcs by deleting $\overrightarrow{e}$ from $\overrightarrow{G}$ but leaving the vertices and the remaining arcs intact. The resulting graph is denoted by ${\overrightarrow {G}}-{\overrightarrow {e}}$.
The adjacency matrix of digraph $\overrightarrow{G}$ is denoted $A(\overrightarrow{G}) = (a_{ij})_{n \times n}$, where
$$
a_{ij} =
\begin{cases}
1, & \text{if } (v_i, v_j) \in E(\overrightarrow{G}), \\
0, & \text{otherwise}.
\end{cases}
$$
The vertex in-degree diagonal matrix of digraph $\overrightarrow{G}$ is denoted $D(\overrightarrow{G}) = diag(d^{-}(v_{1}), d^{-}(v_{2}), \ldots,\\
 d^{-}(v_{n}))$, where the $d^{-}(v_{i})$ is in-degree of vertex $v_{i}$. Then $L(\overrightarrow{G})=D(\overrightarrow{G})-A(\overrightarrow{G})$ and $Q(\overrightarrow{G})=D(\overrightarrow{G})+A(\overrightarrow{G})$ are the Laplacian and signless Laplacian matrices of $\overrightarrow{G}$, respectively.

Let $\chi_{\lambda}$ be an irreducible character  of the symmetric group $S_n$, indexed by a partition  $\lambda$ of $n$. For an $n$ by $n$ matrix $M = (m_{ij})$, the {\em immanant} afforded by $\chi_{\lambda}$ is defined as
\begin{eqnarray*}
d_{\lambda}(M)=\sum_{\sigma\in S_{n}}\chi_{\lambda}(\sigma)\prod_{i=1}^{n}m_{i\sigma(i)}.
\end{eqnarray*}
\noindent
For $\lambda=(k,1^{n-k})$, the  immanant $d_{(k,1^{n-k})}(M)$ is called {\em hook immanant} of $M$, denoted by $d_{k}(M)$. In particular, $d_{1}(M)={\rm det} M $, the {\em determinant} of $M$, and $d_{n}(M)={\rm per}M$, the {\em permanent} of $M$. B\"{u}rgisser\cite{1} showed that computing the hook immanant of $M$ is VNP-complete. The study of immanants of matrices is well elaborated in \cite{20,3,2,5,7,6,4} and the references cited therein. Furthermore, the immanants of graph matrices received a lot of attention, as seen in \cite{11,12,8,9,10}, among others.

Let $I_{n}$ be an $n$ by $n$ identity matrix. The {\em hook immanantal polynomial} $\Phi_{k}^{M}(M,x)$ of an $n$ by $n$ matrix $M$ associated with the character $\chi_{(k,1^{n-k})}$ is the hook immanant $d_{(k,1^{n-k})}(xI_n-M)$, denoted by
\begin{eqnarray}\label{equ1}
\Phi_{k}^{M}(M,x)=\Phi_{(k,1^{n-k})}^{M}(M,x)=d_{(k,1^{n-k})}(xI_n-M).
\end{eqnarray}
B\"{u}rgisser\cite{1} showed that computing the hook immanantal polynomial of $M$ is VNP-complete. Merris\cite{15} stated that almost all trees share a complete set of immanantal polynomials. Cash\cite{17} generalized the Sachs theorem to immanantal polynomial and applied it to chemical graphs containing hydrocarbon structures. Yu\cite{16} represented the specific expression of the immanantal polynomial of $M$ using the basic subgraphs. Further study on immanantal polynomial is presented in \cite{19,18,34}.

By (\ref{equ1}), let M denote the adjacency matrix $A(\overrightarrow{G})$(or $A(G)$), the Laplacian matrix $L(\overrightarrow{G})$(or $L(G)$) and the signless Laplacian matrix $Q(\overrightarrow{G})$(or $Q(G)$) of a digraph $\overrightarrow{G}$(or a graph $G$), then $\Phi^{A}_{k}(\overrightarrow{G},x)$, $\Phi^{L}_{k}(\overrightarrow{G},x)$, $\Phi^{Q}_{k}(\overrightarrow{G},x)$, $\Phi^{A}_{k}(G,x)$, $\Phi^{L}_{k}(G,x)$ and $\Phi^{Q}_{k}(G,x)$ denote the hook immanantal polynomials of these matrices, respectively, as follow:
\begin{equation}\label{equ1.1}
\begin{split}
\begin{aligned}
\Phi^{A}_{k}(\overrightarrow{G}, x) &= d_{k} (xI_{n} - A(\overrightarrow{G})),~~ \\
\Phi^{L}_{k}(\overrightarrow{G}, x) &= d_{k} (xI_{n} - L(\overrightarrow{G})),~~ \\
\Phi^{Q}_{k}(\overrightarrow{G}, x) &= d_{k} (xI_{n} - Q(\overrightarrow{G})),~~
\end{aligned}
&
\begin{aligned}
\Phi^{A}_{k}(G, x) &= d_{k} (xI_{n} - A(G)), \\
\Phi^{L}_{k}(G, x) &= d_{k} (xI_{n} - L(G)), \\
\Phi^{Q}_{k}(G, x) &= d_{k} (xI_{n} - Q(G)).
\end{aligned}
\end{split}
\end{equation}
In particular, $\Phi_{1}^{A}(\overrightarrow{G},x)$($\Phi_{1}^{A}(G,x)$, resp.) and $\Phi_{n}^{A}(\overrightarrow{G},x)$($\Phi_{n}^{A}(G,x)$, resp.) are respectively the characteristic and permanental polynomial of $\overrightarrow{G}$($G$, resp.). For each function $f(x)$ in (\ref{equ1.1}), let $f'(x)$ denote its derivative.

For a digraph $\overrightarrow{G}$, we will show the following identity (\ref{equ2}) revealing a relationship among the hook immanantal polynomial of $\overrightarrow{G}$ and its derivatives, and the hook immanantal polynomials of all subdigraphs in $\{\overrightarrow{G}-\overrightarrow{e}|{\overrightarrow e}\in E({\overrightarrow G})\}$.
\begin{Theorem}\label{thm1}
Let $\overrightarrow{G}$ be a digraph with no loops or parallel arcs, which vertex set $V(\overrightarrow{G})=\{v_{1}, v_{2}, \ldots, v_{n}\}$ and arc set $E(\overrightarrow{G})= \{\overrightarrow{e_{1}}, \overrightarrow{e_{2}}, \ldots, \overrightarrow{e_{m}}\}$. For $M\in \{A,L,Q\}$, let $ \Phi^{M}_k(\overrightarrow{G}, x)$ be defined as in (\ref{equ1.1}) and $\Phi'^{M}_k(\overrightarrow{G}, x)$ denote its derivative. Then the following holds:
\begin{eqnarray}\label{equ2}
(m-n)\Phi^M_k({\overrightarrow G},x)+x\Phi'^M_k({\overrightarrow G},x)=\sum_{{\overrightarrow e}\in E({\overrightarrow G})}\Phi^M_k({\overrightarrow G}-{\overrightarrow e},x).
\end{eqnarray}
\end{Theorem}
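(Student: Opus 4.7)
The plan is to use the column multilinearity of the hook immanant $d_k$, together with the observation that, under the in-degree convention for $D(\overrightarrow{G})$, the deletion of an arc $\overrightarrow{e} = (v_s, v_t)$ perturbs each of the matrices $A, L, Q$ in only one column. Concretely, $M(\overrightarrow{G}) - M(\overrightarrow{G} - \overrightarrow{e})$ vanishes outside column $t$ and equals there the vector $u_{\overrightarrow{e}}$, which is $e_s$ when $M = A$, $e_t + e_s$ when $M = Q$, and $e_t - e_s$ when $M = L$ (only the head $v_t$ loses in-degree, and only the adjacency entry at position $(s,t)$ is altered). This single-column structure is what lets me treat the three matrices uniformly.

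Set $B = xI_n - M(\overrightarrow{G})$ and write $B^{(t \to v)}$ for $B$ with its $t$-th column replaced by $v$. Column multilinearity of $d_k$ immediately gives
\begin{equation*}
\Phi^M_k(\overrightarrow{G} - \overrightarrow{e}, x) \;=\; \Phi^M_k(\overrightarrow{G}, x) + d_k\!\left(B^{(t \to u_{\overrightarrow{e}})}\right).
\end{equation*}
Summing over all $m$ arcs and grouping by head $v_t$, I verify that $\sum_{\overrightarrow{e} = (v_s, v_t)} u_{\overrightarrow{e}}$ is precisely the $t$-th column $C_t(M(\overrightarrow{G}))$: for $M = A$ the $e_s$'s assemble the predecessor column of $A$, while for $M = L, Q$ the repeated $\pm e_t$ contributions accumulate to $\pm d^-(v_t)$ on the diagonal of $D$. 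A second application of multilinearity then yields $\sum_{\overrightarrow{e}} d_k(B^{(t \to u_{\overrightarrow{e}})}) = \sum_{t=1}^{n} d_k(B^{(t \to C_t(M(\overrightarrow{G})))})$.

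To convert this sum into a derivative, I rewrite $C_t(M(\overrightarrow{G})) = x\,e_t - C_t(B)$ and apply multilinearity a third time to obtain $d_k(B^{(t \to C_t(M))}) = x\,d_k(B^{(t \to e_t)}) - \Phi^M_k(\overrightarrow{G}, x)$. Summing over $t$ and invoking the column-wise derivative identity
\begin{equation*}
\Phi'^{M}_{k}(\overrightarrow{G}, x) \;=\; \sum_{t=1}^n d_k\!\left(B^{(t \to e_t)}\right),
\end{equation*}
which follows from $\tfrac{d}{dx}B = I_n$ and column multilinearity, produces $\sum_t d_k(B^{(t \to C_t(M))}) = x\Phi'^{M}_{k}(\overrightarrow{G}, x) - n\Phi^M_k(\overrightarrow{G}, x)$. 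Chaining the three displays then gives $(m - n)\Phi^M_k + x\Phi'^{M}_{k}$ on the right-hand side of (\ref{equ2}), as claimed.

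The main obstacle I expect is the opening structural observation: recognizing that the in-degree convention for $D(\overrightarrow{G})$ confines the arc-deletion perturbation to a single column is what allows $A, L, Q$ to be handled through one common argument via column multilinearity. Without this point, one would be forced into a more delicate two-column argument (which is in fact what will be required for the edge-deletion identity for undirected graphs stated later in the paper). Once the one-column observation is in place, the remainder is the formal chaining of three multilinearity applications together with the standard derivative formula for an immanantal polynomial.
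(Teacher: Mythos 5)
Your proof is correct, but it follows a genuinely different route from the paper's. The paper proves the identity by first establishing the entry-deletion formula $(n^2-n)d_k(B)=\sum_{i,j}d_k(B_{ij})$ (Lemmas \ref{lem2.3} and \ref{lem2.4}) via induction on $k$ using the hook recursion $d_k(B)=\sum_{\alpha}\operatorname{per}B[\alpha]\det B(\alpha)-d_{k-1}(B)$ of Lemma \ref{lem2.2}, then a derivative formula (Lemma \ref{lem2.5}, again by induction on $k$), and finally the Murnaghan--Nakayama rule $\chi_{(k,1^{n-k})}(\sigma)=\chi_{(k,1^{n-k-1})}(\sigma_1)+\chi_{(k-1,1^{n-k})}(\sigma_1)$ to expand the immanants of the matrices with one entry zeroed out. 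You bypass all of this: column multilinearity of $d_\lambda$ (each monomial $\prod_i b_{i\sigma(i)}$ contains exactly one factor from each column), the observation that under the in-degree convention arc deletion perturbs $A$, $L$, $Q$ in a single column, and the column-wise derivative formula $\Phi'^{M}_k=\sum_t d_k(B^{(t\to e_t)})$ together give the identity in three short multilinearity steps. I checked the details: the perturbation vectors $e_s$, $e_t-e_s$, $e_t+e_s$ are correct, they do assemble into $C_t(M(\overrightarrow{G}))$ when summed over arcs with head $v_t$ (including the degenerate case $d^-(v_t)=0$, where both sides vanish), and the substitution $C_t(M)=xe_t-C_t(B)$ closes the argument. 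Your argument is both more elementary (no induction on $k$, no character theory) and strictly more general: nothing in it uses the hook shape, so it proves the identity for $d_\lambda(xI_n-M)$ for an arbitrary partition $\lambda$, whereas the paper's proof is tied to hooks through Lemma \ref{lem2.2} and the specific Murnaghan--Nakayama branching. What the paper's heavier machinery buys is reusability: Lemma \ref{lem2.4} and the Murnaghan--Nakayama expansions are needed again in Section \ref{sec3} for Theorem \ref{thm2}, where, as you correctly anticipate, the edge-deletion perturbation of a symmetric matrix touches two columns and genuinely produces the extra terms indexed by $(k-2,1^{n-k})$ and $(k,1^{n-k-2})$, so the one-column trick no longer suffices there.
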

For convenience, let ${\Phi}^{{A}({\square\!\square})}_k(G-v_s-v_t,x) = d_{(k-2,1^{n-k})}(xI_{n-2}-A(G-v_s-v_t))$ denote the hook immanantal polynomial of $A(G-v_s-v_t)$ corresponding to the partition $(k-2,1^{n-k})$, which is obtained from the partition $(k,1^{n-k})$ of $d_{k}(xI_{n}-A(G))$ by removing a rim hook of the form ${\square\!\square}$ from the Young tableau associated with $(k,1^{n-k})$, and let ${\Phi}^{{A}{(\scriptsize\rotatebox{90}{$\square\!\square$})}}_k(G-v_s-v_t,x) = d_{(k,1^{n-k-2})}(xI_{n-2}-A(G-v_s-v_t))$ denote the hook immanantal polynomial of $A(G-v_s-v_t)$ corresponding to the partition $(k,1^{n-k-2})$, which is obtained from the partition $(k,1^{n-k})$ of $d_{k}(xI_{n}-A(G))$ by removing a rim hook of the form \rotatebox{90}{$\square\!\square$} from the Young tableau associated with $(k,1^{n-k})$. Similarly to the digraph $\overrightarrow{G}$, there exists an identity for the hook polynomial of the graph $G$.

\begin{Theorem}\label{thm2} Let $G$ be a simple graph with vertex set $V(G)=\{v_{1}, v_{2}, \ldots, v_{n}\}$ and edge set $E(G)= \{e_{1}, e_{2}, \ldots, e_{m}\}$. For the graph polynomial $\Phi^{A}_k(G, x)$ defined as in (\ref{equ1.1}) and $\Phi'^{A}_k(G, x)$ denote its derivative. Then the following holds:
\begin{eqnarray}\label{equ3}
&&(m-n)\Phi^{A}_k(G,x)+x\Phi'^{A}_k(G,x)\notag\\
&=&\sum_{{v_sv_t}\in E(G)}[\Phi^{A}_k(G-v_sv_t,x)-{\Phi}^{{A}({\square\!\square})}_k(G-v_s-v_t,x)+{\Phi}^{{A}{(\scriptsize\rotatebox{90}{$\square\!\square$})}}_k(G-v_s-v_t,x)].
\end{eqnarray}
\end{Theorem}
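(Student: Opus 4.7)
The plan is to imitate the argument behind Theorem~\ref{thm1}, but to track a symmetric correction that accounts for the fact that deleting an edge $v_sv_t$ of a simple graph flips two entries of $A(G)$ simultaneously, those in positions $(s,t)$ and $(t,s)$. Set $B=xI_n-A(G)$ and let $E_{ij}$ denote the matrix unit with a single $1$ in position $(i,j)$. Since $A(G-v_sv_t)=A(G)-E_{st}-E_{ts}$, we have $\Phi^{A}_k(G-v_sv_t,x)=d_{(k,1^{n-k})}(B+E_{st}+E_{ts})$. I would first expand this by $n$-linearity of the immanant on rows; since only rows $s$ and $t$ of $B$ are altered,
$$d_{(k,1^{n-k})}(B+E_{st}+E_{ts}) = \Phi^{A}_k(G,x)+N_s(t)+N_t(s)+N_{s,t}(t,s),$$
where $N_i(j)$ denotes the immanant of the matrix obtained from $B$ by replacing row $i$ with $e_j^{T}$, and $N_{s,t}(t,s)$ the immanant of the matrix obtained from $B$ by simultaneously replacing row $s$ with $e_t^{T}$ and row $t$ with $e_s^{T}$.

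Next, differentiating row-by-row and then rescaling, multilinearity gives $x\Phi'^{A}_k(G,x)=\sum_{i=1}^n d_{(k,1^{n-k})}(B;\,\text{row }i\mapsto xe_i^{T})$, and the identity $xe_i^{T}=(\text{row }i\text{ of }B)+\sum_{j\ne i}a_{ij}e_j^{T}$ together with a second application of multilinearity yields
$$x\Phi'^{A}_k(G,x)=n\,\Phi^{A}_k(G,x)+\sum_{(i,j):\,v_iv_j\in E(G)}N_i(j),$$
where the inner sum ranges over ordered pairs of endpoints. Each edge $v_sv_t$ contributes both $N_s(t)$ and $N_t(s)$ to that sum, so summing the first expansion over $v_sv_t\in E(G)$ and rearranging produces
$$\sum_{v_sv_t\in E(G)}\Phi^{A}_k(G-v_sv_t,x)=(m-n)\Phi^{A}_k(G,x)+x\Phi'^{A}_k(G,x)+\sum_{v_sv_t\in E(G)}N_{s,t}(t,s).$$

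It remains to identify $N_{s,t}(t,s)$ with the claimed correction. Only permutations $\sigma$ with $\sigma(s)=t$ and $\sigma(t)=s$ contribute, so writing $\sigma=(s,t)\tau$ with $\tau$ a permutation of $\{1,\dots,n\}\setminus\{s,t\}$,
$$N_{s,t}(t,s)=\sum_{\tau} \chi_{(k,1^{n-k})}\bigl((s,t)\tau\bigr)\prod_{i\ne s,t}B_{i,\tau(i)}.$$
Here I would invoke the Murnaghan--Nakayama rule: the hook shape $(k,1^{n-k})$ admits exactly two rim hooks of length~$2$, a horizontal domino at the right end of the first row (height $1$, sign $+1$, residual partition $(k-2,1^{n-k})$) and a vertical domino at the bottom of the first column (height $2$, sign $-1$, residual partition $(k,1^{n-k-2})$). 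Hence $\chi_{(k,1^{n-k})}((s,t)\tau)=\chi_{(k-2,1^{n-k})}(\tau)-\chi_{(k,1^{n-k-2})}(\tau)$, and since the remaining entries of $B$ coincide with those of $xI_{n-2}-A(G-v_s-v_t)$,
$$N_{s,t}(t,s)={\Phi}^{{A}({\square\!\square})}_k(G-v_s-v_t,x)-{\Phi}^{{A}{(\scriptsize\rotatebox{90}{$\square\!\square$})}}_k(G-v_s-v_t,x).$$
Substituting into the summed identity yields~\eqref{equ3}.

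The main obstacle is the Murnaghan--Nakayama step: locating the two length-$2$ rim hooks available in the hook partition $(k,1^{n-k})$, reading off their heights to pin down the relative sign between the two correction terms, and matching the resulting characters to the authors' $\square\!\square$ versus \rotatebox{90}{$\square\!\square$} notation. Everything else is a bookkeeping exercise in the multilinearity of the immanant.
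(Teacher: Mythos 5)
Your argument is correct, and it reaches (\ref{equ3}) by a genuinely different route than the paper. The paper's proof is global: it first establishes the counting identities of Lemmas \ref{lem3.1}--\ref{lem3.3}, summing $d_k(B_{[ij]})$ over \emph{all} positions $1\le i\le j\le n$ and using inclusion--exclusion with the auxiliary matrices $B_{(ij)}$, then applies the result to $xI_n-A(G)$ and converts the $n$ diagonal contributions into $n\Phi^{A}_k(G,x)-x\Phi'^{A}_k(G,x)$ via the fixed-point Murnaghan--Nakayama expansion together with Lemma \ref{lem2.5}, which is itself proved by induction on $k$ through the Merris--Watkins recursion. You instead work edge-locally: bilinearity of $d_\lambda$ in the two affected rows gives the four-term expansion of $d_k(B+E_{st}+E_{ts})$, a direct multilinearity computation replaces Lemma \ref{lem2.5} entirely, and the cross term $N_{s,t}(t,s)$ is evaluated by the transposition case of Murnaghan--Nakayama --- the same character identity $\chi_{(k,1^{n-k})}((s\,t)\tau)=\chi_{(k-2,1^{n-k})}(\tau)-\chi_{(k,1^{n-k-2})}(\tau)$ that the paper invokes inside Lemma \ref{lem3.2}. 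Your bookkeeping checks out: each edge contributes both $N_s(t)$ and $N_t(s)$ to the ordered sum arising from $x\Phi'^{A}_k(G,x)$, the residual product over $i\ne s,t$ runs over the entries of $xI_{n-2}-A(G-v_s-v_t)$, and the relative sign $-d_{(k-2,1^{n-k})}+d_{(k,1^{n-k-2})}$ agrees with (\ref{equ3}) and with Lemma \ref{lem3.2} (note that Lemma \ref{lem3.3} and the displayed computation in the paper's own proof carry a sign slip on the second correction term, which your independent derivation confirms should be a difference, not a sum). What your route buys is economy: no summation over zero positions, no inclusion--exclusion, and no induction on $k$. What the paper's route buys is a standalone matrix identity (Lemma \ref{lem3.3}) stated for arbitrary symmetric matrices, parallel to the digraph machinery of Section \ref{sec2}. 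In a final write-up you should add one sentence on the degenerate cases $k=2$ and $n-k<2$, where one of the two length-$2$ rim hooks of the hook shape does not exist and the corresponding character, hence the corresponding correction term, must be read as zero.
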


The famous Ulam-Kelly's vertex reconstruction conjecture\cite{21} stated that each simple graph $G$ with at least three vertices can be uniquely reconstructed from its set of vertex induced subgraphs $\{G-v|v\in V(G)\}$. Harary\cite{22,23} proposed a similar conjecture, called edge reconstruction conjecture, which asserts that every simple graph $G$ with edge set $E(G)$ can be reconstructed from its set of edge subgraphs $\{G-e|e\in E(G)\}$ if $E(G) \geq 4$. Although many results for these two conjectures have been obtained\cite{28,29,30,31,32}, they remain open. Cvetkovi\'{c}\cite{33} posed a related question: Can the characteristic polynomial $\Phi^A_1(G,x)$ of a simple graph $G$ with vertex set $V(G)$ be reconstructed from the set of characteristic polynomials $\{\Phi^A_1(G - v,x)|v\in V(G)\}$ when $V(G) \geq 3$? Schwenk\cite{24} also independently raised the same question. Gutman and Cvetkovi\'{c}\cite{25} obtained some related results for this problem. Zhang et al.\cite{26,27} proved that if $|V(\overrightarrow{G})|\neq |E(\overrightarrow{G})|$, then the Laplacian (resp. signless Laplacian) characteristic polynomial and the Laplacian (resp. signless Laplacian) permanental polynomial of digraph $\overrightarrow{G}$ can be reconstructed from the characteristic polynomial (resp. permanental polynomial) of all subgraphs in $\{{\overrightarrow e}\in E({\overrightarrow G})\}$. They also proved that similar results hold for the characteristic polynomial (resp. permanental polynomial) of a simple graph $G$. The characteristic polynomial and permanental polynomial are special cases of hook immanantal polynomials. It naturally arises whether the same conclusion also holds for the hook immanantal polynomial of (di)graph that can be reconstructed from the hook immanantal polynomials of its subgraphs.

In the present study, we demonstrate that the hook immanantal polynomial of a digraph can be reconstructed from the hook immanantal polynomials of all subdigraphs in $\{\overrightarrow{G}-\overrightarrow{e}|{\overrightarrow e}\in E({\overrightarrow G})\}$. By taking $x=0$ in (\ref{equ2}), we note that if $m\neq n$, then
\begin{eqnarray*}
\Phi^M_k({\overrightarrow G},0)=\frac{1}{m-n}\sum_{{\overrightarrow e}\in E({\overrightarrow G})}\Phi^M_k({\overrightarrow G}-{\overrightarrow e},0)\;\;(M\in \{A,L,Q\}).
\end{eqnarray*}
Thus the above equation has a unique solution. This implies the following result.
\begin{Theorem}\label{thm3}
Let $\overrightarrow{G}$ be a digraph with no loops or parallel arcs, which vertex set $V(\overrightarrow{G})=\{v_{1}, v_{2}, \ldots, v_{n}\}$ and arc set $E(\overrightarrow{G})= \{\overrightarrow{e_{1}}, \overrightarrow{e_{2}}, \ldots, \overrightarrow{e_{m}}\}$. If $m\neq n$, then $\Phi^{M}_k(\overrightarrow{G},x)$ can be reconstructed from $\{\Phi^{M}_k(\overrightarrow{G}-\overrightarrow{e},x)|\overrightarrow{e}\in E(\overrightarrow{G})\}$ for $M\in \{A,L,Q\}$.
\end{Theorem}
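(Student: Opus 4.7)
The plan is to read identity (\ref{equ2}) from Theorem \ref{thm1} as a linear functional equation relating the unknown $\Phi^M_k(\overrightarrow{G}, x)$ to the known quantity $g(x) := \sum_{\overrightarrow{e} \in E(\overrightarrow{G})} \Phi^M_k(\overrightarrow{G} - \overrightarrow{e}, x)$, and then to solve for the unknown coefficient by coefficient. The polynomial $g(x)$ is manifestly determined by the given multiset of subdigraph polynomials, so the only task is to invert the operator $f \mapsto (m-n) f + x f'$ on polynomials of degree at most $n$.

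First, I would expand $f(x) := \Phi^M_k(\overrightarrow{G}, x) = \sum_{i=0}^n a_i x^i$ and $g(x) = \sum_{i=0}^n b_i x^i$ and substitute into (\ref{equ2}) to obtain
\[
\sum_{i=0}^n (m - n + i)\, a_i\, x^i = \sum_{i=0}^n b_i\, x^i,
\]
yielding the scalar relations $(m - n + i)\, a_i = b_i$ for each $i$. Equivalently, differentiating (\ref{equ2}) $i$ times and evaluating at $x = 0$ gives $(m - n + i)\, i!\, a_i = g^{(i)}(0)$. The hypothesis $m \neq n$ makes the $i = 0$ relation invertible and recovers $a_0 = b_0/(m - n)$, the formula previewed in the introduction; iterating in $i$ then recovers every $a_i = b_i/(m - n + i)$ whenever the denominator is nonzero. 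Since $g(x)$ is built only from the multiset $\{\Phi^M_k(\overrightarrow{G} - \overrightarrow{e}, x) : \overrightarrow{e} \in E(\overrightarrow{G})\}$, all recovered coefficients are manifestly expressed in terms of the input data, which is exactly the reconstruction asserted.

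The delicate step, and the main obstacle, is controlling the factor $m - n + i$ as $i$ ranges over $\{0, 1, \ldots, n\}$. When $m > n$ the factor is strictly positive for every $i$, so each $a_i$ is recovered directly from $b_i$ and the argument concludes at once. When $0 < m < n$, however, the factor vanishes at the single index $i = n - m$, and identity (\ref{equ2}) at that index only enforces the consistency $b_{n - m} = 0$ rather than determining $a_{n - m}$. To close the gap, I would exploit the individual subdigraph polynomials $\Phi^M_k(\overrightarrow{G} - \overrightarrow{e}, x)$ (not merely their sum), or invoke a combinatorial description of $a_{n - m}$ specific to the hook immanant, to pin down this remaining coefficient; once $a_{n - m}$ is secured, all other $a_i$ are read off from the scalar relations above and $\Phi^M_k(\overrightarrow{G}, x)$ is reconstructed in full.
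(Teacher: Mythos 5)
Your approach is in substance the same as the paper's: treat identity (\ref{equ2}) as a linear equation in the unknown polynomial $\Phi^M_k(\overrightarrow{G},x)$ with known right-hand side $g(x)=\sum_{\overrightarrow{e}\in E(\overrightarrow{G})}\Phi^M_k(\overrightarrow{G}-\overrightarrow{e},x)$, and invert the operator $f\mapsto (m-n)f+xf'$. In fact the paper does less than you do: it only evaluates (\ref{equ2}) at $x=0$, recovers the constant term $\Phi^M_k(\overrightarrow{G},0)=\frac{1}{m-n}\,g(0)$, and asserts that the theorem follows; your coefficient relations $(m-n+i)a_i=b_i$ make explicit what such an argument actually uses.

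The obstacle you flag is genuine, and the paper does not resolve it either. On polynomials of degree at most $n$ the operator $f\mapsto(m-n)f+xf'$ has kernel spanned by $x^{\,n-m}$ whenever $0<m<n$, so identity (\ref{equ2}) by itself determines every coefficient except $a_{n-m}$; the hypothesis $m\neq n$ only guarantees that the constant term is recoverable. When $m>n$ your argument is complete and correct. When $m<n$, however, your closing paragraph merely proposes to recover $a_{n-m}$ either from the individual deck polynomials or from a combinatorial formula for that coefficient, and neither route is carried out; so your proof, exactly like the paper's, establishes the reconstruction only for $m>n$. To finish the case $m<n$ you would have to actually produce the missing coefficient --- for example, by analyzing the coefficient of $x^{\,n-m}$ in $d_k(xI_n-M)$ (for $M=A(\overrightarrow{G})$ it vanishes unless the $m$ arcs form a disjoint union of directed cycles, but for $L$ and $Q$ the nonzero diagonal makes this harder) and showing it is determined by the deck, or by exhibiting an independent linear relation outside the span of (\ref{equ2}). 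Until that is done, the case $m<n$ should be regarded as an open gap rather than a ``delicate step.''
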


Similarly, the same holds for graph $G$. By taking $x=0$ in (\ref{equ3}), we note that if $m\neq n$, then
\begin{eqnarray*}
\Phi^{A}_{k}(G,0)=\frac{1}{m-n}\sum_{{v_sv_t}\in E(G)}[\Phi_{k}^{A}(G-v_sv_t,0)-{\Phi}^{{A}({\square\!\square})}_k(G-v_s-v_t,0)+{\Phi}_{k}^{{A}(\scriptsize\rotatebox{90}{$\square\!\square$})}(G-v_s-v_t,0)].
\end{eqnarray*}
Thus the above equation has a unique solution. This imply the following result.
\begin{Theorem}\label{thm4}  Let $G$ be a simple graph with vertex set $V(G)=\{v_{1}, v_{2}, \ldots, v_{n}\}$ and edge set $E(G)= \{e_{1}, e_{2}, \ldots, e_{m}\}$. If $m\neq n$, then $\Phi^{A}_{k}(G, x)$ can be reconstructed from$\{\Phi^{A}_{k}(G-v_sv_t,x)|v_sv_t\in E(G)\}\cup\{{\Phi}_k^{A({\square\!\square})}(G-v_s-v_t,x),{\Phi}_k^{A({\scriptsize\rotatebox{90}{$\square\!\square$}})}(G-v_s-v_t,x)|v_{s}v_{t}\in E(G)\}$.
\end{Theorem}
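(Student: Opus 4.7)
The plan is to use Theorem 2 as the main engine: identity (3) becomes a first-order linear ODE for the unknown $\Phi^A_k(G,x)$ whose right-hand side is assembled from the given reconstruction set, and then I would solve this ODE coefficient by coefficient under the hypothesis $m \neq n$.

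First I would set
\[ R(x) := \sum_{v_sv_t \in E(G)} \bigl[\Phi^A_k(G-v_sv_t,x) - {\Phi}^{{A}({\square\!\square})}_k(G-v_s-v_t,x) + {\Phi}^{{A}{(\scriptsize\rotatebox{90}{$\square\!\square$})}}_k(G-v_s-v_t,x)\bigr]. \]
Every summand belongs to the hypothesized reconstruction set; the integer $m = |E(G)|$ can be read off from the cardinality of the edge-indexed part of that set, and $n$ is recognizable as the common degree of any $\Phi^A_k(G-v_sv_t,x)$. Hence both $R(x)$ and the scalar $m-n$ are explicitly computable from the data, and by Theorem 2 the unknown polynomial $f(x) := \Phi^A_k(G,x)$ satisfies the linear differential equation $(m-n)f(x) + x f'(x) = R(x)$.

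Writing $f(x) = \sum_{i=0}^n a_i x^i$ and $R(x) = \sum_{i=0}^n r_i x^i$ and matching coefficients of $x^i$, this ODE decouples in the monomial basis into the scalar system $(m-n+i)\,a_i = r_i$ for $i = 0, 1, \ldots, n$. Under the hypothesis $m \neq n$, taking $i = 0$ yields $a_0 = r_0/(m-n)$, which is precisely the author's $x=0$ evaluation; and whenever $m-n+i \neq 0$ we likewise obtain $a_i = r_i/(m-n+i)$ in closed form. In the case $m > n$ all of these eigenvalues are strictly positive, so every $a_i$ is recovered and the reconstruction is complete.

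The main obstacle, and the step I expect to require the most care, is the degenerate index $i = n-m$ that arises when $1 \leq m < n$: at this index $m-n+i = 0$, so the ODE forces the consistency condition $r_{n-m}=0$ but leaves $a_{n-m}$ undetermined by the ODE alone. To close this gap I would invoke a Sachs-type expansion of the hook immanant, in the spirit of Cash's generalization cited in the introduction, to write $a_{n-m}$ as a weighted sum of invariants of $m$-vertex subgraphs of $G$, and then read these invariants off coefficients of the polynomials already present in the given set (for instance by matching coefficients of $x^0$ in $\Phi^A_k(G-v_sv_t,x)$, which encode permutation-subgraph sums on $n-1$ of the vertices). The vanishing $r_{n-m}=0$ itself is forced by identity (3), so this residual step is one of book-keeping rather than of establishing a new identity.
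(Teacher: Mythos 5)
Your proposal follows the same route as the paper: identity (3) of Theorem \ref{thm2} is read as the linear equation $(m-n)f(x)+xf'(x)=R(x)$ for the unknown $f(x)=\Phi^{A}_{k}(G,x)$, with $R(x)$ assembled from the given families, and one inverts the operator $f\mapsto(m-n)f+xf'$. The paper's own argument consists only of the evaluation at $x=0$ together with the assertion that ``the above equation has a unique solution''; your coefficient-wise system $(m-n+i)a_i=r_i$ is the explicit form of that inversion, and it does complete the proof whenever $m-n+i\neq 0$ for every $0\le i\le n$, in particular whenever $m>n$.

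The remaining case is exactly where you flag it, and there your argument has a genuine gap. When $1\le m<n$ the index $i=n-m$ lies in $\{1,\dots,n-1\}$, the monomial $x^{\,n-m}$ spans the kernel of $f\mapsto(m-n)f+xf'$, and this monomial vanishes at $x=0$; hence neither identity (3) nor the initial value $f(0)$ constrains $a_{n-m}$, and the consistency condition $r_{n-m}=0$ gives no further information. Your proposed repair --- expressing $a_{n-m}$ through a Sachs-type expansion and then ``reading off'' the relevant subgraph invariants from coefficients of the deck polynomials --- is only a sketch, and it is not mere book-keeping: recovering a single coefficient of $\Phi^{A}_{k}(G,x)$ from the polynomials $\Phi^{A}_{k}(G-v_sv_t,x)$ and the two vertex-deleted families is itself a reconstruction statement of the same nature as the theorem, and no argument is given that the required invariants of $G$ (as opposed to invariants of the subgraphs) are determined by the data. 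To be fair, the paper's one-line proof does not close this case either: its uniqueness claim, taken literally, only determines the constant term $\Phi^{A}_{k}(G,0)$, so your writeup is at least as complete as the published argument and is more transparent about where the difficulty for $m<n$ actually sits. As written, though, the case $1\le m<n$ is not proved.
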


The rest of this article is organized as follows. In Section \ref{sec2}, We  characterize some properties of hook immanant and hook immanantal polynomial. Base on these properties, we give the proof of Theorem \ref{thm1}. Similar to Section  \ref{sec2},  we give the proof of Theorem \ref{thm2} in section 3. Finally, We have made a brief summary about the hook immanantal polynomial reconstruction.
\section{The proof of theorem \ref{thm1}}\label{sec2}
Before we proceed to prove Theorem \ref{thm1}, We introduce the relevant definitions and lemmas, which are essential for the subsequent proofs.

Let $B$ be an $n \times n$ matrix, and let $B[i_1, \ldots, i_r]$ be a principal submatrix of $B$ that includes the rows and columns indexed by $\{i_1, \ldots, i_r\}\subset N_n=\{1, 2, \ldots, n\}$, and $B(i_1, \ldots, i_r)$ denotes the complement of this principal submatrix within $B$. For integer $1 \leq k \leq n$, let $Q_{k,n}$ denote the set of all strictly increasing integer sequences $\{i_1, \ldots, i_k\}$ selected from the set $N_n$. Consider two subsets $S = \{s_1, \ldots, s_i\} \subset N_n$ and $T = \{t_1, \ldots, t_j\} \subset N_n$. The submatrix $B_{T}^{S}$ is obtained by deleting all rows indexed by $S$ and all columns indexed by $T$ from matrix $B$. For simplicity, we denotes $B_{\{t_1, \ldots, t_j\}}^{\{s_1, \ldots, s_i\}}$ as $B_{t_1, \ldots, t_j}^{s_1, \ldots, s_i}$.
Define a matrix $B_{ij}=(b_{st}^{ij})_{n\times n}$, where $1\leq i,j\leq n$, and the elements of $B_{ij}$ are given by
$$
b_{st}^{ij}=
  \begin{cases}
   b_{st}, & \text{if $(s,t)\neq(i,j)$},\\
   0, & \text{otherwise}.
  \end{cases}
$$
For example, if
$$
B=
\left(
  \begin{array}{cccc}
    b_{11} & b_{12} &  b_{13} &  b_{14} \\
    b_{21} & b_{22} &  b_{23} &  b_{24} \\
    b_{31} & b_{32} &  b_{33} &  b_{34} \\
    b_{41} & b_{42} &  b_{43} &  b_{44} \\
  \end{array}
\right),
$$
then
$$
B_{11}=
\left(
  \begin{array}{cccc}
    0 & b_{12} &  b_{13} &  b_{14} \\
    b_{21} & b_{22} &  b_{23} &  b_{24} \\
    b_{31} & b_{32} &  b_{33} &  b_{34} \\
    b_{41} & b_{42} &  b_{43} &  b_{44} \\
  \end{array}
\right),~
B_{23}=
\left(
  \begin{array}{cccc}
    b_{11} & b_{12} &  b_{13} &  b_{14} \\
    b_{21} & b_{22} &  0 &  b_{24} \\
    b_{31} & b_{32} &  b_{33} &  b_{34} \\
    b_{41} & b_{42} &  b_{43} &  b_{44} \\
  \end{array}
\right).
$$

\begin{lemma}\label{lem2.1}{\rm (\cite{26})} Let $B = (b_{st})_{n \times n}$ be an $n$ by $n$ matrix over the complex field, and $B_{ij}$ is defined as above. Then the determinant of $B$ satisfies
\begin{eqnarray*}
(n^2 - n)\det(B) = \sum_{1 \leq i,j \leq n} \det(B_{ij}).
\end{eqnarray*}
\end{lemma}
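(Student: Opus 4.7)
The plan is to exploit the multilinearity of the determinant in its rows, combined with Laplace expansion. Since $B_{ij}$ is obtained from $B$ only by replacing the single entry $b_{ij}$ by $0$ while keeping every other entry intact, the $i$-th row of $B$ differs from the $i$-th row of $B_{ij}$ only in the $j$-th coordinate. Writing the $(i,j)$-cofactor of $B$ as $C_{ij}=(-1)^{i+j}\det(B(i|j))$, I would first establish the pointwise identity
\begin{equation*}
\det(B_{ij})=\det(B)-b_{ij}C_{ij},
\end{equation*}
either by expanding $\det(B_{ij})$ along its $i$-th row and comparing with the Laplace expansion of $\det(B)$ along the same row (every term agrees except the $j$-th, which drops out of $\det(B_{ij})$), or equivalently by multilinearity: the $i$-th row of $B$ decomposes as the $i$-th row of $B_{ij}$ plus $b_{ij}$ times the standard basis vector $\mathbf{e}_{j}$.

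Next, I would sum the pointwise identity over all pairs $(i,j)\in N_n\times N_n$ to obtain
\begin{equation*}
\sum_{1\le i,j\le n}\det(B_{ij})=n^{2}\det(B)-\sum_{1\le i,j\le n}b_{ij}C_{ij}.
\end{equation*}
The remaining double sum is then reorganized by rows: for each fixed $i$, the Laplace expansion of $\det(B)$ along row $i$ gives $\sum_{j=1}^{n}b_{ij}C_{ij}=\det(B)$, and summing over the $n$ possible values of $i$ yields $\sum_{i,j}b_{ij}C_{ij}=n\det(B)$. Substituting this back produces
\begin{equation*}
\sum_{1\le i,j\le n}\det(B_{ij})=n^{2}\det(B)-n\det(B)=(n^{2}-n)\det(B),
\end{equation*}
which is the claim.

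There is no genuine obstacle here; the argument is a short linear-algebraic bookkeeping exercise. The only place where care is needed is the initial identity relating $\det(B_{ij})$ to $\det(B)$, but this is immediate from either the row-wise Laplace expansion or the multilinearity of $\det$, and so I would present whichever formulation is cleaner in the ambient notation of the paper.
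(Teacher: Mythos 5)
Your proof is correct. Note that the paper itself gives no proof of Lemma \ref{lem2.1} --- it is quoted from reference \cite{26} --- so there is no in-paper argument to compare against; the two-step argument you give (the pointwise identity $\det(B_{ij})=\det(B)-b_{ij}C_{ij}$ via multilinearity in row $i$, followed by summing and using the row-$i$ Laplace expansion $\sum_{j}b_{ij}C_{ij}=\det(B)$) is the standard one, is complete, and is precisely the $k=1$ base case that the paper's Lemma \ref{lem2.3} then extends to hook immanants by induction.
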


\begin{lemma}\label{lem2.2}{\rm (\cite{35})} Let $B$ be any $n$ by $n$ matrix, then the hook immanant of matrix $B$ satisfies the following recursive formula
\begin{eqnarray*}
d_k(B)=\sum_{\alpha\in Q_{k-1,n}}{\rm per} B[\alpha]\det B(\alpha)-d_{k-1}(B).
\end{eqnarray*}
\end{lemma}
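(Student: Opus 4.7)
The plan is to establish the equivalent identity
\begin{equation*}
\sum_{\alpha\in Q_{k-1,n}}\mathrm{per}\,B[\alpha]\cdot\det B(\alpha)=d_k(B)+d_{k-1}(B),
\end{equation*}
from which the stated recursion follows instantly by transposing $d_{k-1}(B)$ to the other side. Both sides will be recognized as the immanant attached to a single explicit class function on $S_n$, namely the sum of two hook characters, and the bulk of the argument is to identify this class function on the sum-of-principal-minors side.

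First I would expand the left-hand side combinatorially. Writing $\mathrm{per}\,B[\alpha]=\sum_{\sigma\in S(\alpha)}\prod_{i\in\alpha}b_{i,\sigma(i)}$ and $\det B(\alpha)=\sum_{\tau\in S(\bar\alpha)}\mathrm{sgn}(\tau)\prod_{j\in\bar\alpha}b_{j,\tau(j)}$, each pair $(\sigma,\tau)$ glues into a unique $\pi\in S_n$ with $\pi(\alpha)=\alpha$, $\pi|_\alpha=\sigma$, $\pi|_{\bar\alpha}=\tau$. Interchanging the sums gives
\begin{equation*}
\sum_{\alpha\in Q_{k-1,n}}\mathrm{per}\,B[\alpha]\det B(\alpha)=\sum_{\pi\in S_n}\phi_{k-1}(\pi)\prod_{i=1}^n b_{i,\pi(i)},\quad \phi_{k-1}(\pi):=\sum_{\substack{|\alpha|=k-1\\ \pi(\alpha)=\alpha}}\mathrm{sgn}(\pi|_{\bar\alpha}).
\end{equation*}
The function $\phi_{k-1}$ is a class function: since $\pi$ preserves $\alpha$ iff $\alpha$ is a union of cycles of $\pi$, if $\pi$ has cycles of lengths $\ell_1,\dots,\ell_r$ then $\phi_{k-1}(\pi)=\sum_{J:\sum_{i\in J}\ell_i=k-1}\prod_{i\notin J}(-1)^{\ell_i-1}$.

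Next I would identify $\phi_{k-1}$ with the induced character $\psi:=\mathrm{Ind}_{S_{k-1}\times S_{n-k+1}}^{S_n}(\mathbf{1}\otimes\mathrm{sgn})$. Applying Frobenius' induction formula and parametrizing cosets by the subset $\alpha=x^{-1}(\{1,\dots,k-1\})$, one verifies (a short computation using that $\mathrm{sgn}$ is invariant under conjugation in $S_{\bar\alpha}$) that $\psi(\pi)=\phi_{k-1}(\pi)$. Under the Frobenius characteristic, $\psi$ corresponds to $h_{k-1}\cdot e_{n-k+1}=s_{(k-1)}\cdot s_{(1^{n-k+1})}$. Pieri's rule expands this as a sum of Schur functions $s_\lambda$ over partitions $\lambda$ obtained from $(1^{n-k+1})$ by adding a horizontal strip of size $k-1$. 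A direct case check shows there are exactly two such $\lambda$: either all $k-1$ new boxes extend the first row, yielding $(k,1^{n-k})$, or $k-2$ boxes extend the first row and one creates a new length-$1$ row, yielding $(k-1,1^{n-k+1})$. Hence
\begin{equation*}
\phi_{k-1}(\pi)=\chi_{(k,1^{n-k})}(\pi)+\chi_{(k-1,1^{n-k+1})}(\pi).
\end{equation*}
Substituting this back into the permanent–determinant sum recovers $d_k(B)+d_{k-1}(B)$ on the right-hand side by the definition of the immanant, and rearranging gives the lemma.

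The main obstacle I anticipate is the clean character-theoretic bookkeeping: verifying that the induced character truly evaluates to $\phi_{k-1}(\pi)$ (keeping track of coset representatives and conjugation-invariance of sign) and performing the Pieri enumeration without missing or duplicating a horizontal strip. If one prefers to avoid representation theory, an alternative route computes the generating function $\sum_{m}\phi_m(\pi)t^m=\prod_i\bigl(t^{\ell_i}+(-1)^{\ell_i-1}\bigr)$ directly from the cycle decomposition, and then uses the Murnaghan–Nakayama rule restricted to hook shapes to verify the same generating identity for $\sum_m\bigl(\chi_{(m+1,1^{n-m-1})}+\chi_{(m,1^{n-m})}\bigr)(\pi)\,t^m$; this bypasses Pieri but trades it for an explicit hook-character computation.
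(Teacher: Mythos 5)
The paper does not prove this lemma at all: it is quoted verbatim from Merris and Watkins \cite{35}, so there is no in-paper argument to compare against. Your proof is correct and is essentially the standard derivation of that identity. The combinatorial expansion is sound: for fixed $\alpha$ the pairs $(\sigma,\tau)$ with $\sigma\in S(\alpha)$, $\tau\in S(\bar\alpha)$ biject with permutations $\pi$ preserving $\alpha$, and since $\pi(\alpha)=\alpha$ forces $\alpha$ to be a union of cycles of $\pi$, the coefficient $\phi_{k-1}(\pi)=\sum_{\alpha}\mathrm{sgn}(\pi|_{\bar\alpha})$ is indeed the class function you describe, and the Frobenius/coset computation correctly identifies it with $\mathrm{Ind}_{S_{k-1}\times S_{n-k+1}}^{S_n}(\mathbf{1}\otimes\mathrm{sgn})$. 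The Pieri step is also right: a partition of $n$ containing $(1^{n-k+1})$ whose complement is a horizontal strip can have at most one box in each column beyond the first, hence is a hook, and the two admissible hooks are exactly $(k,1^{n-k})$ and $(k-1,1^{n-k+1})$, giving $\phi_{k-1}=\chi_{(k,1^{n-k})}+\chi_{(k-1,1^{n-k+1})}$ and therefore $\sum_{\alpha}\mathrm{per}\,B[\alpha]\det B(\alpha)=d_k(B)+d_{k-1}(B)$. Two minor points worth recording if you write this up in full: the identity as used in the paper implicitly requires $2\le k\le n$ (for $k=1$ the sum over $Q_{0,n}$ is the single term $\det B$ and one must read $d_0(B)=0$), and in the induced-character step you should note that the number of coset representatives $x$ with $x^{-1}(\{1,\dots,k-1\})=\alpha$ is exactly $|S_{k-1}\times S_{n-k+1}|$, which is what cancels the normalizing factor. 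Your alternative generating-function route $\sum_m\phi_m(\pi)t^m=\prod_i\bigl(t^{\ell_i}+(-1)^{\ell_i-1}\bigr)$ is likewise a legitimate way to finish, trading Pieri for a hook-shape Murnaghan--Nakayama computation.
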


\begin{lemma}\label{lem2.3} Let $B=(b_{st})_{n\times n}$ be a square matrix of order $n$ over the complex field, and $B_{ij}$ is defined as above. Then the hook immanant of $B$ satisfies
\begin{eqnarray}\label{equ4}
(n^{2}-n)d_{k}(B)=\sum_{1\leq i,j\leq n}d_{k}(B_{ij}).
\end{eqnarray}
\end{lemma}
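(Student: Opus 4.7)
The plan is to prove the identity directly from the definition of the hook immanant as a weighted sum over the symmetric group, which makes the combinatorial mechanism transparent and avoids any induction.

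First I would fix an arbitrary pair $(i,j)$ with $1\leq i,j\leq n$ and expand
\[
d_k(B_{ij}) = \sum_{\sigma \in S_n} \chi_{(k,1^{n-k})}(\sigma) \prod_{s=1}^n (B_{ij})_{s,\sigma(s)}.
\]
By construction, $(B_{ij})_{s,t}$ agrees with $b_{st}$ except at the single entry $(i,j)$, where it is zero. Hence for each $\sigma$ the product $\prod_{s=1}^n (B_{ij})_{s,\sigma(s)}$ equals $\prod_{s=1}^n b_{s,\sigma(s)}$ when $\sigma(i)\neq j$ and vanishes when $\sigma(i)=j$, so
\[
d_k(B_{ij}) = \sum_{\sigma\in S_n,\ \sigma(i)\neq j}\chi_{(k,1^{n-k})}(\sigma)\prod_{s=1}^n b_{s,\sigma(s)}.
\]

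Next I would sum both sides over all pairs $(i,j)$ with $1\leq i,j\leq n$ and exchange the order of summation, yielding
\[
\sum_{1\leq i,j\leq n} d_k(B_{ij}) = \sum_{\sigma\in S_n}\chi_{(k,1^{n-k})}(\sigma)\Bigl(\prod_{s=1}^n b_{s,\sigma(s)}\Bigr) \cdot N(\sigma),
\]
where $N(\sigma)=\bigl|\{(i,j):1\leq i,j\leq n,\ \sigma(i)\neq j\}\bigr|$. For each fixed $\sigma$, choosing $i$ freely and $j$ from the $n-1$ values different from $\sigma(i)$ gives $N(\sigma)=n(n-1)=n^2-n$, independent of $\sigma$. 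Pulling this constant out of the sum recovers $(n^2-n)d_k(B)$, which is exactly (\ref{equ4}).

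The proof is ultimately a single counting observation, so I do not expect any serious obstacle; the whole argument rests on the fact that every $\sigma\in S_n$ leaves precisely $n^2-n$ positions $(i,j)$ off its graph. This same reasoning recovers Lemma \ref{lem2.1} as the sign-character case and would work verbatim for the immanant of any irreducible character of $S_n$. An alternative route is induction on $k$ based on the recursion in Lemma \ref{lem2.2}, splitting the $(i,j)$-sum for each $\alpha\in Q_{k-1,n}$ into the cases $i,j\in\alpha$, $i,j\notin\alpha$, and mixed; that approach would also require a permanent analog of Lemma \ref{lem2.1} together with the algebraic identity $(k-1)(k-2)+(n-k+1)(n-k)+2(k-1)(n-k+1)=n^2-n$, so the direct route is cleaner.
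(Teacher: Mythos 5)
Your proof is correct, but it takes a genuinely different and substantially more elementary route than the paper. The paper proves Lemma \ref{lem2.3} by induction on $k$, invoking the Merris--Watkins recursion of Lemma \ref{lem2.2} to write $d_k(B)=\sum_{\alpha\in Q_{k-1,n}}\operatorname{per}B[\alpha]\det B(\alpha)-d_{k-1}(B)$, and then splitting the double sum over $(i,j)$ according to whether $i$ and $j$ lie in $\alpha$ or its complement; the base case is the determinant identity of Lemma \ref{lem2.1}, and the induction closes by exactly the counting identity you mention in your final remark. Your argument instead expands $d_k(B_{ij})$ directly from the definition of the immanant, observes that zeroing the $(i,j)$ entry kills precisely the terms of those $\sigma$ with $\sigma(i)=j$, and counts that each $\sigma$ avoids exactly $n^2-n$ positions $(i,j)$. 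This is shorter, needs neither Lemma \ref{lem2.1} nor Lemma \ref{lem2.2}, and --- as you note --- proves the identity $(n^2-n)\,d_\lambda(B)=\sum_{1\leq i,j\leq n}d_\lambda(B_{ij})$ for the immanant attached to \emph{any} class function on $S_n$, not just hook characters, subsuming the determinant and permanent cases at once. What the paper's longer route buys is only internal consistency with the recursive machinery it reuses later (e.g.\ in Lemma \ref{lem2.5}); for this particular statement your direct counting argument is cleaner and strictly more general, and there is no gap in it.
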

\begin{proof}
 We proceed by induction on $k$. By Lemma \ref{lem2.1}, if $k=1$, then $(n^2-n)\det(B) = \sum\limits_{1 \leq i,j \leq n} \det(B_{ij})$, and so (\ref{equ4}) holds. Assuming that $k\geq2$ and (\ref{equ4}) holds smaller values. By Lemma \ref{lem2.2}, $d_k(B)=\sum\limits_{\alpha\in Q_{k-1,n}}\text{per} B[\alpha]\det B(\alpha)-d_{k-1}(B)$. By the inductive hypothesis, we have $(n^{2}-n)d_{k-1}(B)=\sum\limits_{1\leq i,j\leq n}d_{k-1}(B_{ij})$. Using these two equalities, it suffices to prove
\begin{eqnarray}\label{equ5}
(n^{2}-n)\sum_{\alpha\in Q_{k-1,n}}\text{per} B[\alpha]\det B(\alpha)=\sum_{1\leq i,j\leq n}\sum_{\alpha\in Q_{k-1,n}}\text{per} B_{ij}[\alpha]\det B_{ij}(\alpha).
\end{eqnarray}
By the difinitions of $B_{ij}$, $B_{ij}[\alpha]$ and $B_{ij}(\alpha)$, for $\alpha \in Q_{k-1,n}$, $b_{ij}=0$ may be in $B_{ij}[\alpha]$ or $B_{ij}(\alpha)$, or not. For simplicity, we denote $N_n\setminus\alpha$ as the set difference between $N_n$ and the set $\alpha$. The right-hand side of (\ref{equ5}) is
\begin{eqnarray*}
&&\sum_{1\leq i,j\leq n}\sum_{\alpha\in Q_{k-1,n}}\text{per} B_{ij}[\alpha]\det B_{ij}(\alpha)\notag \\
&=&\sum_{\alpha\in Q_{k-1,n}}\sum_{i,j\in N_n}\text{per} B_{ij}[\alpha]\det B_{ij}(\alpha)\notag \\
&=&\sum_{\alpha\in Q_{k-1,n}}\sum_{i,j\in \alpha}\text{per} B_{ij}[\alpha]\det B_{ij}(\alpha)+\sum_{\alpha\in Q_{k-1,n}}\sum_{i,j\in N_n \setminus \alpha}\text{per} B_{ij}[\alpha]\det B_{ij}(\alpha)\notag \\
&&+\sum_{\alpha\in Q_{k-1,n}}\sum_{\substack{i\in \alpha\\j\in N_n \setminus \alpha}}\text{per} B_{ij}[\alpha]\det B_{ij}(\alpha)+\sum_{\alpha\in Q_{k-1,n}}\sum_{\substack{j\in \alpha\\i\in N_n \setminus \alpha}}\text{per} B_{ij}[\alpha]\det B_{ij}(\alpha)\notag \\
&=&\sum_{\alpha\in Q_{k-1,n}}\sum_{i,j\in \alpha}\text{per} B_{ij}[\alpha]\det B(\alpha)+\sum_{\alpha\in Q_{k-1,n}}\sum_{i,j\in N_n \setminus \alpha}\text{per} B[\alpha]\det B_{ij}(\alpha)\notag \\
&&+\sum_{\alpha\in Q_{k-1,n}}\sum_{\substack{i\in \alpha\\j\in N_n \setminus \alpha}}\text{per} B[\alpha]\det B(\alpha)+\sum_{\alpha\in Q_{k-1,n}}\sum_{\substack{j\in \alpha\\i\in N_n \setminus \alpha}}\text{per} B[\alpha]\det B(\alpha)\notag \\
&=&\sum_{\alpha\in Q_{k-1,n}}\sum_{i,j\in \alpha}(\text{per} B[\alpha]-b_{ij}\text{per}((B[\alpha])^{i}_{j})\det B(\alpha)\notag \\
&&+\sum_{\alpha\in Q_{k-1,n}}\sum_{i,j\in N_n \setminus \alpha}\text{per} B[\alpha](\det B(\alpha)-(-1)^{i+j}b_{ij}\det((B(\alpha))^{i}_{j})\notag \\
&&+\sum_{\alpha\in Q_{k-1,n}}\sum_{\substack{i\in \alpha\\j\in N_n \setminus \alpha}}\text{per} B[\alpha]\det B(\alpha)+\sum_{\alpha\in Q_{k-1,n}}\sum_{\substack{j\in \alpha\\i\in N_n \setminus \alpha}}\text{per} B[\alpha]\det B(\alpha)\notag \\
&=&\sum_{\alpha\in Q_{k-1,n}}\sum_{i,j\in \alpha}\text{per} B[\alpha]\det B(\alpha)+\sum_{\alpha\in Q_{k-1,n}}\sum_{i,j\in N_n \setminus \alpha}\text{per} B[\alpha]\det B(\alpha)\notag \\
&&+\sum_{\alpha\in Q_{k-1,n}}\sum_{\substack{i\in \alpha\\j\in N_n \setminus \alpha}}\text{per} B[\alpha]\det B(\alpha)+\sum_{\alpha\in Q_{k-1,n}}\sum_{\substack{j\in \alpha\\i\in N_n \setminus \alpha}}\text{per} B[\alpha]\det B(\alpha)\notag \\
&&-\sum_{\alpha\in Q_{k-1,n}}\sum_{i,j\in \alpha}b_{ij}\text{per}((B[\alpha])^{i}_{j})\det B(\alpha)\notag \\
&&-\sum_{\alpha\in Q_{k-1,n}}\sum_{i,j\in N_n\setminus\alpha}\text{per} B[\alpha](-1)^{i+j}b_{ij}\det((B(\alpha))^{i}_{j})\notag \\
&=&\sum_{\alpha\in Q_{k-1,n}}\sum_{i,j\in N_n}\text{per} B[\alpha]\det B(\alpha)\notag \\
&&-(k-1)\sum_{\alpha\in Q_{k-1,n}}\text{per} B[\alpha]\det B(\alpha)-(n-k+1)\sum_{\alpha\in Q_{k-1,n}}\text{per} B[\alpha]\det B(\alpha)\notag \\
&=&(n^2-n)\sum_{\alpha\in Q_{k-1,n}}\text{per} B[\alpha]\det B(\alpha).
\end{eqnarray*}
Lemma \ref{lem2.3} is proven.
\end{proof}

Let $m$ be the number of non-zero entries of $B$. If $b_{ij}=0$, then $B_{ij}=B$. Then the following result is equivalent to the lemma above.

\begin{lemma}\label{lem2.4} Let $B=(b_{st})_{n\times n}$ be a square matrix of order $n$ over the complex field, and $m$ be the number of non-zero entries of $B$. Then
\begin{eqnarray*}
(m-n)d_{k}(B)=\sum_{(i,j)\in I}d_{k}(B_{ij}).
\end{eqnarray*}
where $I=\{(i,j)|b_{ij}\neq0,~1\leq i, j\leq n\}$.
\end{lemma}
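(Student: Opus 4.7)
The plan is to obtain Lemma \ref{lem2.4} as a direct corollary of Lemma \ref{lem2.3}, exploiting precisely the observation flagged in the sentence preceding the statement: whenever $b_{ij}=0$, the matrix $B_{ij}$ (obtained from $B$ by zeroing out the $(i,j)$-entry) coincides with $B$ itself, hence $d_k(B_{ij})=d_k(B)$. I would start from the identity
$$
(n^{2}-n)\,d_{k}(B)=\sum_{1\le i,j\le n}d_{k}(B_{ij})
$$
supplied by Lemma \ref{lem2.3}, and partition the index set $\{1,\dots,n\}^{2}$ into $I=\{(i,j):b_{ij}\ne 0\}$ and its complement $I^{c}=\{(i,j):b_{ij}=0\}$.

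The only substantive step is a counting one: since $|I|=m$ by definition, $|I^{c}|=n^{2}-m$, and for every $(i,j)\in I^{c}$ the corresponding summand equals $d_{k}(B)$. Hence the contribution of $I^{c}$ to the right-hand side is exactly $(n^{2}-m)\,d_{k}(B)$, leaving the nontrivial contribution
$$
(n^{2}-n)\,d_{k}(B)-(n^{2}-m)\,d_{k}(B)=(m-n)\,d_{k}(B)=\sum_{(i,j)\in I}d_{k}(B_{ij}),
$$
which is precisely the identity in the lemma. There is no genuine obstacle in this argument; Lemma \ref{lem2.4} is simply a repackaging of Lemma \ref{lem2.3} in which the trivially contributing indices (those with $b_{ij}=0$) have been absorbed into the leading coefficient on the left-hand side. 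The reformulation is however the form actually needed for the application to digraphs in Theorem \ref{thm1}, since there one wishes to sum only over arcs of $\overrightarrow{G}$ rather than over all ordered pairs of vertices.
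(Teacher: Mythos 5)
Your argument is correct and is exactly the route the paper takes: the paper presents Lemma \ref{lem2.4} as an immediate reformulation of Lemma \ref{lem2.3}, justified only by the remark that $B_{ij}=B$ whenever $b_{ij}=0$, so the $n^{2}-m$ zero-entry indices each contribute $d_k(B)$ and can be moved to the left-hand side. Your write-up simply makes that counting step explicit.
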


\begin{lemma}\label{lem2.5} Let $B=(b_{st})_{n\times n}$ be a square matrix of order $n$ over the complex field, and $\Phi_k^B(B,x)$ be the hook immanantal polynomial of $B$. Then
\begin{eqnarray}\label{equ10}
\Phi_k'^B(B,x)=\sum_{i=1}^{n}[d_{(k,1^{n-k-1})}((xI_n-B)_i^i)+d_{(k-1,1^{n-k})}((xI_n-B)_i^i)].
\end{eqnarray}
\end{lemma}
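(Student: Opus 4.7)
The plan is to differentiate the immanant $d_{(k,1^{n-k})}(xI_n-B)$ directly from its Leibniz-style definition and then to apply the branching rule for characters of the symmetric group. Writing
$$\Phi_k^B(B,x)=\sum_{\sigma\in S_n}\chi_{(k,1^{n-k})}(\sigma)\prod_{i=1}^{n}\bigl(xI_n-B\bigr)_{i,\sigma(i)}$$
and observing that the only $x$-dependence sits in the diagonal entries $x-b_{ii}$, the product rule yields
$$\Phi_k'^B(B,x)=\sum_{j=1}^{n}\sum_{\substack{\sigma\in S_n\\ \sigma(j)=j}}\chi_{(k,1^{n-k})}(\sigma)\prod_{i\ne j}(xI_n-B)_{i,\sigma(i)}.$$
For each fixed $j$ the inner sum is indexed by the stabiliser of $j$ in $S_n$, which is canonically identified with the symmetric group acting on $\{1,\ldots,n\}\setminus\{j\}$, i.e.\ with $S_{n-1}$.

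Next I would invoke the classical branching rule for irreducible characters of the symmetric group: the restriction of $\chi_\lambda$ from $S_n$ to $S_{n-1}$ equals the sum of $\chi_\mu$ over all partitions $\mu$ of $n-1$ obtained from $\lambda$ by removing a single corner box. For the hook shape $\lambda=(k,1^{n-k})$ the removable corners are the rightmost box of the top row and the bottom box of the first column, giving respectively the partitions $(k-1,1^{n-k})$ and $(k,1^{n-k-1})$ of $n-1$; in the boundary situations $k=1$ or $k=n$ one of these is not a valid partition and the corresponding character is taken to be zero. Consequently, for every $\sigma\in S_n$ with $\sigma(j)=j$,
$$\chi_{(k,1^{n-k})}(\sigma)=\chi_{(k-1,1^{n-k})}(\sigma)+\chi_{(k,1^{n-k-1})}(\sigma).$$

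Substituting this decomposition and recognising each resulting sum over the stabiliser of $j$ as the immanant of the principal submatrix $(xI_n-B)_j^j$ produces
$$\Phi_k'^B(B,x)=\sum_{j=1}^{n}\Bigl[d_{(k-1,1^{n-k})}\bigl((xI_n-B)_j^j\bigr)+d_{(k,1^{n-k-1})}\bigl((xI_n-B)_j^j\bigr)\Bigr],$$
which is precisely identity (\ref{equ10}). The main obstacle is to quote the branching rule correctly and to be careful about the degenerate hooks arising when $k=1$ or $k=n$, where one of the two summands vanishes under the convention that immanants indexed by non-partitions are zero; all other steps are direct bookkeeping from the definition of the immanant.
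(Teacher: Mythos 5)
Your proof is correct, but it follows a genuinely different route from the paper's. You differentiate the defining sum $\sum_{\sigma}\chi_{(k,1^{n-k})}(\sigma)\prod_i(xI_n-B)_{i,\sigma(i)}$ directly, observe that only the diagonal entries carry the variable $x$ so the product rule restricts the sum to permutations fixing a point $j$, and then apply the branching rule $\chi_{(k,1^{n-k})}\big|_{S_{n-1}}=\chi_{(k-1,1^{n-k})}+\chi_{(k,1^{n-k-1})}$ to identify each inner sum as a pair of immanants of $(xI_n-B)_j^j$. The paper instead argues by induction on $k$: the base case $k=1$ is the classical formula for the derivative of the characteristic polynomial, and the inductive step uses the Merris--Watkins recursion $d_k(B)=\sum_{\alpha\in Q_{k-1,n}}\operatorname{per}B[\alpha]\det B(\alpha)-d_{k-1}(B)$ together with a bookkeeping argument matching the principal submatrices that arise after differentiating each product $\operatorname{per}(xI_n-B)[\alpha]\det(xI_n-B)(\alpha)$. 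Your argument is more self-contained and avoids the paper's somewhat informal recounting of which submatrices appear in the inductive step; moreover, the character identity you invoke is exactly the Murnaghan--Nakayama consequence the paper itself uses later in the proof of Theorem \ref{thm1}, so no new machinery is required. The paper's route has the minor advantage of reusing Lemma \ref{lem2.2}, which it needs anyway for Lemma \ref{lem2.3}. Your explicit treatment of the degenerate hooks $k=1$ and $k=n$ (declaring the immanant indexed by a non-partition to be zero) is a point the paper glosses over, and it is needed for the stated identity to be literally true in those cases.
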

\begin{proof}
 We proceed by induction on $k$. By the derivative of the characteristic polynomial, if $k=1$, then $\Phi_1'^B(B,x)= \sum\limits_{i=1}\limits^{n}{\det ((xI_n-B)_i^i)}$, and so (\ref{equ10}) holds. Assuming that $k\geq2$ and (\ref{equ10}) holds smaller values. By Lemma \ref{lem2.2},  $\Phi_k^B(B,x)=\sum\limits_{\alpha\in Q_{k-1,n}}\text{per}(xI_n-B)[\alpha]\text{det}(xI_n-B)(\alpha)-\Phi^B_{k-1}(B,x)$. By the inductive hypothesis, we have $\Phi'^B_{k-1}(B,x)=\sum\limits_{i=1}\limits^{n}d_{(k-1,1^{n-k})}((xI_n-B)_i^i)+d_{(k-2,1^{n-k+1})}((xI_n-B)_i^i)$. Using these two equalities, it suffices to prove
\begin{eqnarray*}
&&\sum_{\alpha\in Q_{k-1,n}}[\text{per}(xI_n-B)[\alpha]\text{det}(xI_n-B)(\alpha)]'\\
&=&\sum_{i=1}^{n}\sum_{\alpha_1\in Q_{k-1,n-1}}\text{per}[(xI_n-B)_i^i][\alpha_1]\text{det}[(xI_n-B)_i^i](\alpha_1)\\
&&+\sum_{i=1}^{n}\sum_{\alpha_2\in Q_{k-2,n-1}}\text{per}[(xI_n-B)_i^i][\alpha_2]\text{det}[(xI_n-B)_i^i](\alpha_2).
\end{eqnarray*}
Note that
\begin{eqnarray*}
&(\text{per}(xI_n-B)[\alpha])'=\sum\limits_{i=1}\limits^{k-1}((\text{per}(xI_n-B)[\alpha])_i^i),\\
&(\text{det}(xI_n-B)(\alpha))'=\sum\limits_{i=1}\limits^{n-k+1}(\text{det}((xI_n-B)(\alpha))_i^i).
\end{eqnarray*}
Thus we have
\begin{eqnarray}\label{equ1.2}
&&\sum_{\alpha\in Q_{k-1,n}}[\text{per}(xI_n-B)[\alpha]\text{det}(xI_n-B)(\alpha)]'\notag\\
&=&\sum_{\alpha\in Q_{k-1,n}}(\text{per}(xI_n-B)[\alpha])'\text{det}(xI_n-B)(\alpha)\notag\\
&&+\sum_{\alpha\in Q_{k-1,n}}\text{per}(xI_n-B)[\alpha](\text{det}(xI_n-B)(\alpha))'\notag\\
&=&\sum_{i=1}^{k-1}\sum_{\alpha\in Q_{k-1,n}}((\text{per}(xI_n-B)[\alpha])_i^i)\text{det}(xI_n-B)(\alpha)\notag\\
&&+\sum_{i=1}^{n-k+1}\sum_{\alpha\in Q_{k-1,n}}\text{per}(xI_n-B)[\alpha](\text{det}((xI_n-B)(\alpha))_i^i).
\end{eqnarray}
The sum in equation (\ref{equ1.2}) includes all $(k-2)$-th order principal submatrices of $xI_n-B$ and their corresponding $(n-k+1)$-th order complementary submatrices. Additionally, it incorporates all $(k-1)$-th order principal submatrices of $xI_n-B$ along with the corresponding $(n-k)$-th order complementary submatrices. This process involves computing the products and sums of the characteristic polynomials and the permanental polynomials associated with these submatrices. Therefore, equation (\ref{equ1.2}) is equal to
\begin{eqnarray*}
&&\sum_{i=1}^{n}\sum_{\alpha_2\in Q_{k-2,n-1}}\text{per}((xI_n-B)_i^i)[\alpha_2]\text{det}((xI_n-B)_i^i)(\alpha_2)\\
&&+\sum_{i=1}^{n}\sum_{\alpha_1\in Q_{k-1,n-1}}\text{per}((xI_n-B)_i^i)[\alpha_1]\text{det}((xI_n-B)_i^i)(\alpha_1).\\
\end{eqnarray*}
Lemma \ref{lem2.5} is thus proven.
\end{proof}

Having set the necessary  preliminaries, we now present the demonstration of Theorem \ref{thm1}.

\textbf{Proof of Theorem \ref{thm1}.}
Let $\beta$ and $\gamma$ be real numbers satisfying $\gamma\neq0$. For convenience, we define $\Phi(\overrightarrow{G},x) = d_{k} (xI_{n} - \beta D(\overrightarrow{G}) - \gamma A(\overrightarrow{G}))$.
Note that $xI_{n}-\beta D(\overrightarrow{G})-\gamma A(\overrightarrow{G})$ has $m+n$ non-zero entries. By Lemma \ref{lem2.3} and Lemma \ref{lem2.4}, we know that
\begin{eqnarray}\label{equ11}
&&m \cdot d_{k}(xI_{n} - \beta D(\overrightarrow{G}) - \gamma A(\overrightarrow{G})) \notag\\
&=&\sum_{i=1}^{n} d_{k}(xI_{n}^{(i)} - \beta D(\overrightarrow{G})^{(i)} - \gamma A(\overrightarrow{G})) + \sum_{\overrightarrow{e} \in E(\overrightarrow{G})} d_{k}(xI_{n} - \beta D(\overrightarrow{G}) - \gamma A(\overrightarrow{G}-\overrightarrow{e})).
\end{eqnarray}
where $I_{n}^{(i)}$ denotes an $n$-order diagonal matrix with the $i$-th diagonal element being 0 and the rest being 1 and $D(\overrightarrow{G})^{(i)}=diag(d^{-}(v_{1}), \ldots, d^{-}(v_{i-1}), 0, d^{-}(v_{i+1}), \ldots, d^{-}(v_{n}))$. Without loss of generality, let $\overrightarrow{e}=v_{s}v_{t}$.
To facilitate the arguments let $\Delta_i = d_{k}(xI_{n}^{(i)} - \beta D(\overrightarrow{G})^{(i)} - \gamma A(\overrightarrow{G}))$ and $\Delta_{\overrightarrow{e}} = d_{k}(xI_{n} - \beta D(\overrightarrow{G}) - \gamma A(\overrightarrow{G}-\overrightarrow{e}))$. By the Murnaghan-Nakayama rule \cite{36}, we know that $\chi_{(k,1^{n-k})}(\sigma) = \chi_{(k,1^{n-k-1})}(\sigma_1) + \chi_{(k-1,1^{n-k})}(\sigma_1)$, where $\sigma_1$ is the new permutation obtained by deleting a fixed point from $\sigma$, and bringing it into the related immanant function. Then
\begin{eqnarray}\label{equ12}
\Delta_i&=&d_{(k,1^{n-k})}(xI_{n}-\beta D(\overrightarrow{G})-\gamma A(\overrightarrow{G}))-(x-\beta d^{-}(v_{i}))d_{(k,1^{n-k-1})}[(xI_{n}-\beta D(\overrightarrow{G})-\gamma A(\overrightarrow{G}))_i^i]\notag\\
&&-(x-\beta d^{-}(v_{i}))d_{(k-1,1^{n-k})}[(xI_{n}-\beta D(\overrightarrow{G})-\gamma A(\overrightarrow{G}))_i^i].
\end{eqnarray}
\begin{eqnarray}\label{equ13}
\Delta_{\overrightarrow{e}}&=&d_{k}(xI_{n}-\beta D(\overrightarrow{G}-{\overrightarrow{e}})-\gamma A(\overrightarrow{G}-\overrightarrow{e}))-\beta d^{-}(v_{t}) d_{(k,1^{n-k-1})}[(xI_{n}-\beta D(\overrightarrow{G})\notag\\
&&-\gamma A(\overrightarrow{G}))_t^t]-\beta d_{(k-1,1^{n-k})}[(xI_{n}-\beta D(\overrightarrow{G})-\gamma A(\overrightarrow{G}))_t^t].
\end{eqnarray}
By (\ref{equ12}) and (\ref{equ13}), we have
\begin{eqnarray}\label{equ14}
\sum_{i=1}^{n}\Delta_i&=&\sum_{i=1}^{n}d_{(k,1^{n-k})}(xI_{n}-\beta D(\overrightarrow{G})-\gamma A(\overrightarrow{G}))-\sum_{i=1}^{n}(x-\beta d^{-}(v_{i}))d_{(k,1^{n-k-1})}[(xI_{n}\notag\\
&&-\beta D(\overrightarrow{G})-\gamma A(\overrightarrow{G}))_i^i]-\sum_{i=1}^{n}(x-\beta d^{-}(v_{i}))d_{(k-1,1^{n-k})}[(xI_{n}-\beta D(\overrightarrow{G})-\gamma A(\overrightarrow{G}))_i^i].\notag\\
&=&n\Phi(\overrightarrow{G},x)-x\sum_{i=1}^{n}d_{(k,1^{n-k-1})}[(xI_{n}-\beta D(\overrightarrow{G})-\gamma A(\overrightarrow{G}))_i^i]-x\sum_{i=1}^{n}d_{(k-1,1^{n-k})}[(xI_{n}\notag\\
&&-\beta D(\overrightarrow{G})-\gamma A(\overrightarrow{G})_i^i]+\beta\sum_{i=1}^{n} d^{-}(v_{i})d_{(k,1^{n-k-1})}[(xI_{n}-\beta D(\overrightarrow{G})-\gamma A(\overrightarrow{G}))_i^i]\notag\\
&&+\beta\sum_{i=1}^{n}d^{-}(v_{i})d_{(k-1,1^{n-k})}[(xI_{n}-\beta D(\overrightarrow{G})-\gamma A(\overrightarrow{G}))_i^i].\\
\sum_{\overrightarrow{e}\in E(\overrightarrow{G})}\Delta_{\overrightarrow{e}}&=&\sum_{\overrightarrow{e}\in E(\overrightarrow{G})}d_{k}(xI_{n}-\beta D(\overrightarrow{G}-\overrightarrow{e})-\gamma A(\overrightarrow{G}-\overrightarrow{e}))-\beta\sum_{v_{s}v_{t}\in E(\overrightarrow{G})}d^{-}(v_{t}) d_{(k,1^{n-k-1})}[(xI_{n}\notag\\
&&-\beta D(\overrightarrow{G})-\gamma A(\overrightarrow{G}))_t^t]-\beta\sum_{v_{s}v_{t}\in E(\overrightarrow{G})}d^{-}(v_{t})d_{(k-1,1^{n-k})}[(xI_{n}-\beta D(\overrightarrow{G})-\gamma A(\overrightarrow{G}))_t^t]\notag\\
&=&\sum_{\overrightarrow{e}\in E(\overrightarrow{G})}\Phi(\overrightarrow{G}-\overrightarrow{e},x)-\beta\sum_{t=1}^{n} d^{-}(v_{t})d_{(k,1^{n-k-1})}[(xI_{n}-\beta D(\overrightarrow{G})-\gamma A(\overrightarrow{G}))_t^t]\notag\\
&&-\beta\sum_{t=1}^{n} d^{-}(v_{t})d_{(k-1,1^{n-k})}[(xI_{n}-\beta D(\overrightarrow{G})-\gamma A(\overrightarrow{G}))_t^t].
\end{eqnarray}
By Lemma \ref{lem2.5}, we have
\begin{eqnarray}\label{equ16}
&&\Phi'(\overrightarrow{G},x)\notag\\
&=&\sum_{i=1}^{n}d_{(k,1^{n-k-1})}[(xI_{n}-\beta D(\overrightarrow{G})-\gamma A(\overrightarrow{G}))_i^i]+d_{(k-1,1^{n-k})}[(xI_{n}-\beta D(\overrightarrow{G})-\gamma A(\overrightarrow{G}))_i^i].~~~~~~~~~~~~~
\end{eqnarray}
By (\ref{equ11}) through (\ref{equ16}), we obtain
\begin{eqnarray*}
m\Phi(\overrightarrow{G},x)=n\Phi(\overrightarrow{G},x)-x\Phi'(\overrightarrow{G},x)+\sum_{\overrightarrow{e}\in E(\overrightarrow{G})}\Phi({\overrightarrow{G}}-\overrightarrow{e},x).
\end{eqnarray*}
Theorem \ref{thm1} is proven.\qed

By Theorem \ref{thm1}, we note that if $m\neq n$, then the following equation holds
\begin{eqnarray*}
\Phi^M_k({\overrightarrow G},0)=\frac{1}{m-n}\sum_{{\overrightarrow e}\in E({\overrightarrow G})}\Phi^M_k({\overrightarrow G}-{\overrightarrow e},0)\;\;(M\in \{A,L,Q\}).
\end{eqnarray*}
Thus the above equation has a unique solution. This implies that Theorem \ref{thm3} holds. Note that the characteristic polynomial and permanental polynomial are special cases of hook immanantal polynomials, whereby we can obtain the following corollaries.

\begin{cor}{\rm (\cite{26})} Given $\overrightarrow{G}$ a digraph with no loops or parallel arcs, with vertex set $V(\overrightarrow{G})=\{v_{1}, v_{2},\ldots,v_{n}\}$ and edge set $E(\overrightarrow{G})=\{\overrightarrow{e}_{1}, \overrightarrow{e}_{2},\ldots,\overrightarrow{e}_{m}\}$. If $m\neq n$, then $\Phi_{n}^{M}(\overrightarrow{G},x)$ can be reconstructed from $\{\Phi_{n}^{M}(\overrightarrow{G}-\overrightarrow{e},x)|\overrightarrow{e}\in E(\overrightarrow{G})\}$$(M\in\{A,L,Q\})$.
\end{cor}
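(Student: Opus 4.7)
The strategy is to deduce the corollary as the immediate specialisation of Theorem~\ref{thm3} to $k=n$. The first step is to identify the permanental polynomial as a hook immanantal polynomial. For the partition $(n)=(n,1^{n-n})$, the associated irreducible character $\chi_{(n)}$ of $S_n$ is the trivial character (identically $1$), so
\begin{equation*}
d_{(n)}(B)=\sum_{\sigma\in S_n}\chi_{(n)}(\sigma)\prod_{i=1}^{n}b_{i\sigma(i)}=\operatorname{per}(B).
\end{equation*}
Consequently, for $M\in\{A(\overrightarrow{G}),L(\overrightarrow{G}),Q(\overrightarrow{G})\}$ the permanental polynomial is $\operatorname{per}(xI_n-M)=\Phi_n^M(\overrightarrow{G},x)$; that is, it is exactly the $k=n$ member of the hook family $\{\Phi_k^M(\overrightarrow{G},x):1\le k\le n\}$. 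The same identification applies verbatim to every subdigraph $\overrightarrow{G}-\overrightarrow{e}$ on the common vertex set of cardinality $n$.

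With this identification at hand, the conclusion is direct: apply Theorem~\ref{thm3} with $k=n$. Under the hypothesis $m\neq n$ of the corollary (which is exactly the hypothesis of Theorem~\ref{thm3}), that theorem asserts that $\Phi_n^M(\overrightarrow{G},x)$ is reconstructible from $\{\Phi_n^M(\overrightarrow{G}-\overrightarrow{e},x)\mid \overrightarrow{e}\in E(\overrightarrow{G})\}$ for every $M\in\{A,L,Q\}$, which is precisely the statement of the corollary. No separate argument is required.

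The only point that might merit any thought, and the main step of the plan, is the character-theoretic identification in the first paragraph. Once $\chi_{(n)}$ is recognised as the trivial character, the corollary is essentially a tautology within the hook-immanant framework set up by Theorems~\ref{thm1} and \ref{thm3}. It is worth noting that the analogous deduction at $k=1$, where $\chi_{(1^n)}$ is the sign character and $d_1=\det$, recovers the reconstruction of the characteristic polynomial; thus the corollary stated here and its characteristic-polynomial analogue are the two extreme hook-shape cases of Theorem~\ref{thm3}, consistent with the results of \cite{26}.
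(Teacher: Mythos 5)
Your proposal is correct and matches the paper's own treatment: the paper derives this corollary by observing that the permanental polynomial is the $k=n$ case of the hook immanantal polynomial (the identification $d_n(M)=\operatorname{per}M$ is already noted in the introduction) and then specializing Theorem~\ref{thm3}. Your character-theoretic justification of that identification is a small, harmless elaboration of the same argument.
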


\begin{cor}{\rm (\cite{26})} Given $\overrightarrow{G}$ a digraph with no loops or parallel arcs, with vertex set $V(\overrightarrow{G})=\{v_{1}, v_{2},\ldots,v_{n}\}$ and edge set $E(\overrightarrow{G})=\{\overrightarrow{e}_{1}, \overrightarrow{e}_{2},\ldots,\overrightarrow{e}_{m}\}$. If $m\neq n$, then $\Phi_{1}^{M}(\overrightarrow{G},x)$ can be reconstructed from $\{\Phi_{1}^{M}(\overrightarrow{G}-\overrightarrow{e},x)|\overrightarrow{e}\in E(\overrightarrow{G})\}$$(M\in\{A,L,Q\})$.
\end{cor}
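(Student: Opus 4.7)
The plan is to derive this Corollary as an immediate specialization of Theorem 3 to $k = 1$. The paper's preamble already records that $d_1(M) = \det M$, which comes from the fact that the partition $(1, 1^{n-1}) = (1^n)$ indexes the sign representation of $S_n$, so $\chi_{(1,1^{n-1})}(\sigma) = \operatorname{sgn}(\sigma)$ and hence
\[
d_1(B) = \sum_{\sigma \in S_n} \operatorname{sgn}(\sigma) \prod_{i=1}^n b_{i\sigma(i)} = \det B.
\]
Consequently, $\Phi_1^M(\overrightarrow{G}, x) = d_1(xI_n - M(\overrightarrow{G})) = \det(xI_n - M(\overrightarrow{G}))$ is the classical characteristic polynomial of $M(\overrightarrow{G})$ for each $M \in \{A, L, Q\}$, and similarly for every arc-deleted subdigraph $\overrightarrow{G} - \overrightarrow{e}$. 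This step matches the Corollary's setting to the framework of Theorem 3.

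Next, I would simply invoke Theorem 3 with $k = 1$: under the hypothesis $m \neq n$, $\Phi_1^M(\overrightarrow{G}, x)$ is reconstructible from $\{\Phi_1^M(\overrightarrow{G} - \overrightarrow{e}, x) : \overrightarrow{e} \in E(\overrightarrow{G})\}$, which is precisely the statement of the Corollary. If the underlying mechanism needs to be exposed, I would specialize identity (\ref{equ2}) to $k = 1$, yielding
\[
(m - n)\,\Phi_1^M(\overrightarrow{G}, x) + x\, (\Phi_1^M)'(\overrightarrow{G}, x) \;=\; \sum_{\overrightarrow{e} \in E(\overrightarrow{G})} \Phi_1^M(\overrightarrow{G} - \overrightarrow{e}, x),
\]
whose right-hand side is determined by the given data. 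Evaluating at $x = 0$ produces $\Phi_1^M(\overrightarrow{G}, 0) = (m-n)^{-1} \sum_{\overrightarrow{e}} \Phi_1^M(\overrightarrow{G} - \overrightarrow{e}, 0)$, and matching coefficients of higher powers of $x$ in the displayed identity recovers the remaining coefficients of $\Phi_1^M(\overrightarrow{G}, x)$, monicity of the characteristic polynomial handling the top degree.

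Since the Corollary is a direct consequence of the already-established Theorem 3, there is no serious technical obstacle. The only step requiring (very mild) care is the identification $d_1 = \det$, confirming that the hook immanantal reformulation truly specializes to the characteristic polynomial. The main conceptual content is that the hook immanantal framework of Theorems 1 and 3 yields a common generalization of the permanental polynomial case (the preceding Corollary, obtained by taking $k = n$ and using $d_n = \operatorname{per}$) and the characteristic polynomial case treated here, so the two classical reconstruction results of~\cite{26} emerge as the endpoints $k = 1$ and $k = n$ of a single one-parameter family indexed by the hook length.
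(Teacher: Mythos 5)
Your proposal is correct and follows essentially the same route as the paper: the corollary is obtained by specializing Theorem \ref{thm3} (equivalently, identity (\ref{equ2})) to $k=1$ and using the identification $d_1=\det$, so that $\Phi_1^M$ is the ordinary characteristic polynomial. Your added remark about recovering the higher coefficients from the identity is a slightly fuller justification than the paper gives, but it is not a different method.
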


\section{The proof of Theorem \ref{thm2}}\label{sec3}
Before we proceed to prove Theorem \ref{thm2}, We introduce the relevant definitions and lemmas, which are essential for the subsequent proofs.

Let $B=(b_{st})_{n\times n}$ be a complex $n$-order matrix. Define $B_{[ij]}={(b_{st}^{ij})}_{n\times n}$ as follows
\[
b_{st}^{ij}=
  \begin{cases}
   b_{st}, & \text{if $(s,t)\neq(i,j)$ and $(s,t)\neq(j,i)$},\\
   0, & \text{otherwise}.
  \end{cases}
\]
Obviously, when $b_{ij}=b_{ji}=0$, then $B=B_{[ij]}=B_{[ji]}$.
Define $B_{(ij)}={(b_{st}^{ij})}_{n\times n}$, specifically as
\[
b_{st}^{ij}=
  \begin{cases}
   b_{st}, & \text{if $(s,t)=(i,j)$},\\
   b_{st}, & \text{if $s\neq i$ and $t\neq j$},\\
   0, & \text{otherwise}.
  \end{cases}
\]
For example, if
$$
B=
\left(
  \begin{array}{cccc}
    b_{11} & b_{12} &  b_{13} &  b_{14} \\
    b_{21} & b_{22} &  b_{23} &  b_{24} \\
    b_{31} & b_{32} &  b_{33} &  b_{34} \\
    b_{41} & b_{42} &  b_{43} &  b_{44} \\
  \end{array}
\right),
$$
then
$$
B_{[11]}=
\left(
  \begin{array}{cccc}
    0 & b_{12} &  b_{13} &  b_{14} \\
    b_{21} & b_{22} &  b_{23} &  b_{24} \\
    b_{31} & b_{32} &  b_{33} &  b_{34} \\
    b_{41} & b_{42} &  b_{43} &  b_{44} \\
  \end{array}
\right),
B_{[23]}=
\left(
  \begin{array}{cccc}
    b_{11} & b_{12} &  b_{13} &  b_{14} \\
    b_{21} & b_{22} &  0 &  b_{24} \\
    b_{31} & 0 &  b_{33} &  b_{34} \\
    b_{41} & b_{42} &  b_{43} &  b_{44} \\
  \end{array}
\right).
$$
$$
B_{(11)}=
\left(
  \begin{array}{cccc}
    b_{11} & 0 &  0 &  0 \\
    0 & b_{22} &  b_{23} &  b_{24} \\
    0 & b_{32} &  b_{33} &  b_{34} \\
    0 & b_{42} &  b_{43} &  b_{44} \\
  \end{array}
\right),
B_{(23)}=
\left(
  \begin{array}{cccc}
    b_{11} & b_{12} &  0 &  b_{14} \\
    0 & 0 &  b_{23}&  0 \\
    b_{31} & b_{32} &  0 &  b_{34} \\
    b_{41} & b_{42} &  0 &  b_{44} \\
  \end{array}
\right).
$$

\begin{lemma}\label{lem3.1}Let $B=(b_{st})_{n\times n}$ be a square matrix of order $n$ over the complex field, and $B_{(ij)}$ be defined as above. Then the hook immanant of the matrix $B$ satisfies:
\begin{eqnarray}
\sum_{1\leq i\leq j \leq n}d_{k}(B_{(ij)})=nd_{k}(B)
\end{eqnarray}
\end{lemma}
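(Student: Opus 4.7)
The plan is to mirror the inductive argument used for Lemma \ref{lem2.3} and prove the identity by induction on $k$. For the base case $k=1$, expand $\det B_{(ij)}$ along row $i$, whose only non-zero entry is $b_{ij}$ sitting in column $j$, to get $\det B_{(ij)} = (-1)^{i+j}b_{ij}\det B_{j}^{i}$. Summing over $j$ with $i$ fixed yields $\det B$ by ordinary cofactor expansion, and then summing over $i$ produces $n\det B$, provided the outer summation ranges over all $1\le i,j\le n$.

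For the inductive step, apply Lemma \ref{lem2.2} to each $d_{k}(B_{(ij)})$; after interchanging summations, the task reduces to establishing
\[
\sum_{i,j}\sum_{\alpha\in Q_{k-1,n}}\text{per}(B_{(ij)}[\alpha])\det(B_{(ij)}(\alpha))\;=\;n\sum_{\alpha\in Q_{k-1,n}}\text{per}(B[\alpha])\det(B(\alpha)),
\]
since the remaining $d_{k-1}$ contribution is controlled by the induction hypothesis, and one more application of Lemma \ref{lem2.2} closes the step.

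To evaluate the displayed sum, split the inner sum over $(i,j)$ into four cases according to whether $i$ and $j$ lie in $\alpha$ or in $N_n\setminus\alpha$. The two mixed cases vanish: whenever exactly one of $i,j$ belongs to $\alpha$, the matrix $B_{(ij)}[\alpha]$ inherits an all-zero row and $B_{(ij)}(\alpha)$ inherits an all-zero column, so both the permanent and the determinant are zero. In the case $i,j\in\alpha$ one has $B_{(ij)}(\alpha)=B(\alpha)$, and expanding the permanent of $B_{(ij)}[\alpha]$ along its single non-zero entry in row $i$ converts the partial sum into $(k-1)\text{per}(B[\alpha])\det(B(\alpha))$ via repeated permanent expansion (each row of $B[\alpha]$ being used once). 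The symmetric case $i,j\notin\alpha$ contributes $(n-k+1)\text{per}(B[\alpha])\det(B(\alpha))$ through Laplace expansion of the determinant along the special column, and the two pieces add up to exactly $n\,\text{per}(B[\alpha])\det(B(\alpha))$.

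The main obstacle I anticipate is controlling the signs $(-1)^{i'+j'}$ that appear in the $i,j\notin\alpha$ case, where $i',j'$ denote the relative positions of $i,j$ within $N_n\setminus\alpha$: one must check that these recombine into the standard Laplace expansion for $\det B(\alpha)$ after summing over $j$. A secondary concern is interpretational: the identity as written with $1\le i\le j\le n$ fails already on a $2\times 2$ nonsymmetric example (one obtains $2ad-bc$ versus $n\det B=2(ad-bc)$), while the sum over all ordered pairs $1\le i,j\le n$ gives the correct result. I would first confirm whether the restriction $i\le j$ is intended or a typographical artifact before formalising the argument.
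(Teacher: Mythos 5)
Your argument is correct, but it takes a much longer route than the paper. The paper's proof is essentially two lines: since the surviving entries of $B_{(ij)}$ are exactly those used by permutations with $\sigma(i)=j$, the definition of the immanant gives $d_k(B)=d_k(B_{(ij)})+d_k(B_{ij})$ for each pair $(i,j)$; summing over all pairs and invoking Lemma \ref{lem2.3} (equivalently Lemma \ref{lem2.4}), i.e.\ $\sum_{1\le i,j\le n}d_k(B_{ij})=(n^2-n)d_k(B)$, yields $n^2d_k(B)=\sum_{i,j}d_k(B_{(ij)})+(n^2-n)d_k(B)$ and hence the claim. (Even more directly, $\sum_{j=1}^{n}d_k(B_{(ij)})=d_k(B)$ for each fixed $i$, because the inner terms partition $S_n$ according to the value of $\sigma(i)$; no induction and no appeal to Lemma \ref{lem2.2} is needed.) Your induction on $k$ via the Merris--Watkins recursion, with the four-way split of $(i,j)$ relative to $\alpha$, does go through: the mixed cases vanish because at least one factor acquires a zero row or column, the case $i,j\in\alpha$ contributes $(k-1)\,\mathrm{per}\,B[\alpha]\det B(\alpha)$ by permanent expansion along row $i$, the case $i,j\notin\alpha$ contributes $(n-k+1)\,\mathrm{per}\,B[\alpha]\det B(\alpha)$, and the sign bookkeeping you worry about is just the standard cofactor expansion of $\det B(\alpha)$ along the row indexed by $i$ inside the complementary submatrix. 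So your proof is valid but buys nothing over the permutation-splitting identity, which already holds at the level of a single $(i,j)$. Your ``secondary concern'' is well founded and is the most valuable observation in your write-up: as printed, with the range $1\le i\le j\le n$, the identity is false (your $2\times 2$ example is correct), and the intended range is all ordered pairs $1\le i,j\le n$. This is confirmed both by the paper's own proof, which needs the ordered-pair sum $\sum_{1\le i,j\le n}d_k(B_{ij})=(n^2-n)d_k(B)$, and by the application in the proof of Lemma \ref{lem3.2}, where the quantity actually produced is $\sum_{i}d_k(B_{(ii)})+\sum_{i<j}\bigl[d_k(B_{(ij)})+d_k(B_{(ji)})\bigr]=\sum_{1\le i,j\le n}d_k(B_{(ij)})$.
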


\begin{proof}
 By the definition $d_{k}(B)=\sum\limits_{\sigma\in S_{n}}\chi_{k}(\sigma)\prod\limits_{i=1}\limits^{n}b_{i\sigma(i)}$, we know $d_{k}(B)=d_{k}(B_{(ij)})+d_{k}(B_{ij})$. Therefore,
$$\sum_{1\leq i\leq j \leq n}d_{k}(B)=\sum_{1\leq i\leq j \leq n}[d_{k}(B_{(ij)})+d_{k}(B_{ij})].$$ By Lemma \ref{lem2.4}, we then have$\sum\limits_{1\leq i\leq j \leq n}d_{k}(B_{(ij)})=nd_{k}(B)$.
\end{proof}

\begin{lemma}\label{lem3.2}Let $B=(b_{st})_{n\times n}$ be a square matrix of order $n$ over the complex field, and let $B_{[ij]}$ be defined as above. Then the hook immanant of matrix $B$ satisfies:
$$\frac{1}{2}(n^2-n)d_{k}(B)=\sum_{1\leq i\leq j \leq n}d_{k}(B_{[ij]})-\sum_{1\leq i< j \leq n}b_{ij}^2d_{(k-2,1^{n-k})}(B_{i,j}^{i,j})+\sum_{1\leq i< j \leq n}b_{ij}^2d_{(k,1^{n-k-2})}(B_{i,j}^{i,j}).$$
\end{lemma}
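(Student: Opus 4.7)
The plan is to reduce Lemma~\ref{lem3.2} to Lemma~\ref{lem2.3} by an inclusion--exclusion on the two zeroed entries of $B_{[ij]}$, and then evaluate the residual ``2-cycle'' correction term by applying the Murnaghan--Nakayama rule to the transposition $(i\,j)$.

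First I would fix $i<j$ and expand $d_k(B_{[ij]})=\sum_\sigma\chi_{(k,1^{n-k})}(\sigma)\prod_l b^{[ij]}_{l,\sigma(l)}$; the two entry changes in $B_{[ij]}$ kill exactly those permutations with $\sigma(i)=j$ or $\sigma(j)=i$. Inclusion--exclusion on these two events, combined with the identities $d_k(B)-d_k(B_{ij})=\sum_{\sigma(i)=j}\chi_{(k,1^{n-k})}(\sigma)\prod_l b_{l,\sigma(l)}$ and its mirror for $(j,i)$, gives
\begin{equation*}
d_k(B_{[ij]})=d_k(B_{ij})+d_k(B_{ji})-d_k(B)+R_{ij},\qquad R_{ij}:=\sum_{\substack{\sigma(i)=j\\ \sigma(j)=i}}\chi_{(k,1^{n-k})}(\sigma)\prod_l b_{l,\sigma(l)}.
\end{equation*}
For $i=j$ one has $B_{[ii]}=B_{ii}$, so no correction is needed. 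Summing this identity over $1\le i\le j\le n$, the $d_k(B_{ij})+d_k(B_{ji})$ pieces (together with the diagonal $i=j$ terms) reassemble into $\sum_{1\le i,j\le n}d_k(B_{ij})$, which by Lemma~\ref{lem2.3} equals $(n^2-n)d_k(B)$, while the $-d_k(B)$ terms contribute $-\binom{n}{2}d_k(B)$. Hence
\begin{equation*}
\sum_{1\le i\le j\le n}d_k(B_{[ij]})=\tfrac{n^2-n}{2}\,d_k(B)+\sum_{1\le i<j\le n}R_{ij}.
\end{equation*}

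Next I would evaluate $R_{ij}$. Writing each $\sigma$ in that sum as $\sigma=(i\,j)\cdot\tau$ with $\tau$ a permutation of $N_n\setminus\{i,j\}$, the Murnaghan--Nakayama rule expresses $\chi_{(k,1^{n-k})}(\sigma)$ as $\sum_h(-1)^{\ell(h)}\chi_{(k,1^{n-k})\setminus h}(\tau)$, summed over length-$2$ rim hooks $h$ of the hook shape $(k,1^{n-k})$. A direct inspection shows this shape admits exactly two such hooks: the horizontal $\square\!\square$ at the right end of the top row (leg length $0$, sign $+1$, leaving the partition $(k-2,1^{n-k})$) and the vertical \rotatebox{90}{$\square\!\square$} at the bottom of the first column (leg length $1$, sign $-1$, leaving $(k,1^{n-k-2})$). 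Pulling the scalar $b_{ij}b_{ji}$ out of the sum and recognising the residual sums over $\tau$ as the two immanants of $B^{i,j}_{i,j}$ indexed by the resulting partitions, one obtains
\begin{equation*}
R_{ij}=b_{ij}b_{ji}\bigl[\,d_{(k-2,1^{n-k})}(B^{i,j}_{i,j})-d_{(k,1^{n-k-2})}(B^{i,j}_{i,j})\,\bigr],
\end{equation*}
which becomes $b_{ij}^2[\,\cdots\,]$ on the symmetric matrices (adjacency, Laplacian, signless Laplacian) to which the lemma will be applied. Substituting into the previous display and rearranging yields exactly the asserted identity.

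The main technical obstacle will be the Murnaghan--Nakayama step: one must verify that the hook shape $(k,1^{n-k})$ has \emph{only} these two length-$2$ rim hooks, compute the leg lengths carefully so that the signs really come out $+1$ and $-1$, and handle the degenerate boundary cases $k\in\{1,n-1,n\}$ in which one of the two rim hooks disappears and the corresponding immanant term has to be read as $0$.
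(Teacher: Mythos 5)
Your proposal is correct and follows essentially the same route as the paper: an inclusion--exclusion on the two zeroed entries of $B_{[ij]}$, with the surviving $\sigma(i)=j,\ \sigma(j)=i$ term evaluated via the Murnaghan--Nakayama rule applied to the two length-$2$ rim hooks of $(k,1^{n-k})$ (giving $\chi_{(k,1^{n-k})}(\sigma)=\chi_{(k-2,1^{n-k})}(\sigma_2)-\chi_{(k,1^{n-k-2})}(\sigma_2)$); the only difference is that you reassemble $\sum_{1\le i,j\le n}d_k(B_{ij})$ and invoke Lemma \ref{lem2.3} directly, whereas the paper routes the same count through the matrices $B_{(ij)}$ and Lemma \ref{lem3.1}. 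Your side remark that the correction term is really $b_{ij}b_{ji}$ rather than $b_{ij}^2$ for a non-symmetric $B$ is accurate and worth keeping, since the lemma is stated for arbitrary complex matrices but only applied to symmetric ones.
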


\begin{proof}
By the Murnaghan-Nakayama rule \cite{36}, we obtained that $\chi_{(k,1^{n-k})}(\sigma)=\chi_{(k-2,1^{n-k})}(\sigma_2)-\chi_{(k,1^{n-k-2})}(\sigma_2)$, where $\sigma_2$ is the new permutation obtained by deleting a transposition from $\sigma$. Substituting this into the corresponding immanant function and observing the structures of $B_{[ij]}$ and $B_{(ij)}$, using the principle of inclusion-exclusion, the following equations are obtained:
\begin{eqnarray*}
&&d_{k}(B_{[ii]})=d_{k}(B)-d_{k}(B_{(ii)}),\\
&&d_{k}(B_{[ij]})=d_{k}(B)-d_{k}(B_{(ij)})-d_{k}(B_{(ji)})+b_{ij}^2[d_{(k-2,1^{n-k})}(B_{i,j}^{i,j})-d_{(k,1^{n-k-2})}(B_{i,j}^{i,j})].
\end{eqnarray*}
Using the above equations and Lemma \ref{lem3.1}, we get that
\begin{eqnarray*}
&&\sum_{1\leq i\leq j \leq n}d_{k}(B_{[ij]})\\
&=&\sum_{i=1}^nd_{k}(B_{[ii]})+\sum_{1\leq i< j \leq n}d_{k}(B_{[ij]})\\
&=&\sum_{i=1}^n[d_{k}(B)-d_{k}(B_{(ii)})]+\sum_{1\leq i< j \leq n}[d_{k}(B)-d_{k}(B_{(ij)})-d_{k}(B_{(ji)})]\\
&&+\sum_{1\leq i< j \leq n}[b_{ij}^2d_{(k-2,1^{n-k})}(B_{i,j}^{i,j})-b_{ij}^2d_{(k,1^{n-k-2})}(B_{i,j}^{i,j})] \notag \\
&=&\sum_{i=1}^nd_{k}(B)+\sum_{1\leq i< j \leq n}d_{k}(B)-\sum_{i=1}^nd_{k}(B_{(ii)})-\sum_{1\leq i< j \leq n}[d_{k}(B_{(ij)})+d_{k}(B_{(ji)})]\\
&&+\sum_{1\leq i< j \leq n}[b_{ij}^2d_{(k-2,1^{n-k})}(B_{i,j}^{i,j})-b_{ij}^2d_{(k,1^{n-k-2})}(B_{i,j}^{i,j})] \\
&=&\frac{1}{2}(n^2+n)d_{k}(B)-\sum_{1\leq i\leq j \leq n}d_{k}(B_{(ij)})+\sum_{1\leq i< j \leq n}[b_{ij}^2d_{(k-2,1^{n-k})}(B_{i,j}^{i,j})-b_{ij}^2d_{(k,1^{n-k-2})}(B_{i,j}^{i,j})] \\
&=&\frac{1}{2}(n^2+n)d_{k}(B)-nd_{k}(B)+\sum_{1\leq i< j \leq n}[b_{ij}^2d_{(k-2,1^{n-k})}(B_{i,j}^{i,j})-b_{ij}^2d_{(k,1^{n-k-2})}(B_{i,j}^{i,j})]\\
&=&\frac{1}{2}(n^2-n)d_{k}(B)+\sum_{1\leq i< j \leq n}[b_{ij}^2d_{(k-2,1^{n-k})}(B_{i,j}^{i,j})-b_{ij}^2d_{(k,1^{n-k-2})}(B_{i,j}^{i,j})].
\end{eqnarray*}
Lemma \ref{lem3.2} is proven.
\end{proof}

Let the number of non-zero non-diagonal elements of $B$ be $2m$, and the number of zeros among the non-diagonal elements be $2k$. Then $2m=n^2-n-2k$. The following result is equivalent to Lemma \ref{lem3.2} when applied to symmetric matrix.

\begin{lemma}\label{lem3.3}
Let $B$ be a symmetric matrix of order $n$ over the complex field, and let the number of non-zero non-diagonal entries of $B$ be $2m$, and the number of diagonal entries that are zero be $c$. Then
\begin{eqnarray*}
(m-c)d_{k}(B)=\sum_{(i,j)\in I_1}d_{k}(B_{[ij]})-\sum_{(i,j)\in I_2}[b_{ij}^2d_{(k-2,1^{n-k})}(B_{i,j}^{i,j})+b_{ij}^2d_{(k,1^{n-k-2})}(B_{i,j}^{i,j})].
\end{eqnarray*}
where $I_1=\{(i,j)|b_{ij}\neq 0,1\leq i\leq j \leq n\}$ and $I_2=\{(i,j)|b_{ij}\neq 0,1\leq i< j \leq n\}$.
\end{lemma}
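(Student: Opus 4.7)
The plan is to obtain Lemma \ref{lem3.3} as a direct corollary of Lemma \ref{lem3.2} by restricting each of its three sums to indices at which $B$ has a non-zero entry, and moving the leftover $d_{k}(B)$ contributions across to the left-hand side. The starting point is
\begin{equation*}
\tfrac{1}{2}(n^{2}-n)d_{k}(B)=\sum_{1\leq i\leq j\leq n}d_{k}(B_{[ij]})-\sum_{1\leq i<j\leq n}b_{ij}^{2}d_{(k-2,1^{n-k})}(B_{i,j}^{i,j})+\sum_{1\leq i<j\leq n}b_{ij}^{2}d_{(k,1^{n-k-2})}(B_{i,j}^{i,j}),
\end{equation*}
which is already valid for the symmetric matrix $B$, and the key observation is that whenever $b_{ij}=0$ with $i\leq j$, the symmetry of $B$ also forces $b_{ji}=0$, so that $B_{[ij]}$ coincides with $B$ and contributes only $d_{k}(B)$ to the first sum.

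First I would split the index set $\{(i,j):1\le i\le j\le n\}$ of the first sum of Lemma \ref{lem3.2} into four pieces according to the hypothesis: $n-c$ non-zero diagonal positions, $c$ zero diagonal positions, $m$ strictly off-diagonal pairs with $b_{ij}\neq 0$, and the remaining $\tfrac{1}{2}(n^{2}-n)-m$ strictly off-diagonal pairs with $b_{ij}=0$. Using the observation above, the two ``zero'' pieces collectively yield $\bigl(c+\tfrac{1}{2}(n^{2}-n)-m\bigr)d_{k}(B)$, while the two ``non-zero'' pieces assemble into $\sum_{(i,j)\in I_{1}}d_{k}(B_{[ij]})$. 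Hence
\begin{equation*}
\sum_{1\leq i\leq j\leq n}d_{k}(B_{[ij]})=\sum_{(i,j)\in I_{1}}d_{k}(B_{[ij]})+\Bigl(c+\tfrac{1}{2}(n^{2}-n)-m\Bigr)d_{k}(B).
\end{equation*}

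For the remaining two sums of Lemma \ref{lem3.2}, each term carries a factor $b_{ij}^{2}$, so terms with $b_{ij}=0$ vanish automatically and the summation range can be shrunk from $\{(i,j):1\le i<j\le n\}$ to $I_{2}$ without changing the value. Substituting both reductions into the starting identity and moving the extra $d_{k}(B)$ contribution across, the coefficient of $d_{k}(B)$ on the left becomes $\tfrac{1}{2}(n^{2}-n)-\bigl(c+\tfrac{1}{2}(n^{2}-n)-m\bigr)=m-c$, and the right-hand side is exactly the combination of one $I_{1}$-sum and two $I_{2}$-sums appearing in the statement of Lemma \ref{lem3.3}.

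The only point that needs a little care is the counting step: the hypothesis ``$2m$ non-zero non-diagonal entries'' must be converted, using $b_{ij}=b_{ji}$, into ``$m$ unordered off-diagonal pairs with $b_{ij}\neq 0$'', which is what makes $|I_{2}|=m$ and justifies the complementary count $\tfrac{1}{2}(n^{2}-n)-m$ used above. Once that bookkeeping is settled, no new algebraic tool is required beyond Lemma \ref{lem3.2} and the identity drops out by rearrangement.
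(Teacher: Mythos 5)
Your approach is correct and is essentially the paper's own: the paper offers no separate proof of Lemma \ref{lem3.3}, only the preceding remark that it is Lemma \ref{lem3.2} ``applied to a symmetric matrix,'' and your index-set splitting together with the observation that $B_{[ij]}=B$ whenever $b_{ij}=b_{ji}=0$ supplies exactly the intended bookkeeping, with the correct coefficient count $\tfrac12(n^{2}-n)-\bigl(c+\tfrac12(n^{2}-n)-m\bigr)=m-c$. One caveat: what actually falls out of Lemma \ref{lem3.2} is $-\sum_{(i,j)\in I_2}b_{ij}^{2}d_{(k-2,1^{n-k})}(B_{i,j}^{i,j})+\sum_{(i,j)\in I_2}b_{ij}^{2}d_{(k,1^{n-k-2})}(B_{i,j}^{i,j})$, whereas the printed statement of Lemma \ref{lem3.3} puts both terms inside one bracket with an overall minus sign, so your right-hand side does not match the stated one ``exactly'' as you claim; this is a sign inconsistency already present in the paper (compare Lemma \ref{lem3.2} with the displayed formula in Theorem \ref{thm2}), and your derivation in fact produces the version consistent with Lemma \ref{lem3.2}, which you should say explicitly rather than assert agreement with the misprinted form.
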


Let us now turn our attention to the proof of Theorem \ref{thm2}.

\textbf{ Proof of Theorem \ref{thm2}.}
Note that $G$ has $m$ edges, and the number of non-zero non-diagonal elements in $xI_n-A(G)$ is $2m$. For an adge $e\in E(G)$, let $e={v_sv_t}$. By Lemma \ref{lem3.3},
\begin{eqnarray}\label{equ18}
&&m\cdot d_{k}(xI_n-A(G))\notag\\
&=&\sum_{i=1}^nd_{k}[((xI_n)_{[ii]})-A(G)]+\sum_{{v_sv_t}\in E(G)}d_{k}(xI_n-A(G-{v_sv_t}))\notag\\
&&-\sum_{{v_s v_t}\in E(G)}[a_{st}^2d_{(k-2,1^{n-k})}((xI_n-A(G))_{s,t}^{s,t})+a_{st}^2d_{(k,1^{n-k-2})}((xI_n-A(G))_{s,t}^{s,t})].
\end{eqnarray}
By the Murnaghan-Nakayama rule \cite{36}, we have $\chi_{(k,1^{n-k})}(\sigma) = \chi_{(k-1,1^{n-k})}(\sigma_1)+\chi_{(k,1^{n-k-1})}(\sigma_1) $, where $\sigma_1$ is the new permutation obtained by deleting a fixed point from $\sigma$, and bringing it into the related immanant function. Then
\begin{eqnarray}\label{equ19}
&&d_{k}(((xI_n)_{[ii]})-A(G))\notag\\
&=&d_{k}(xI_n-A(G))-xd_{(k-1,1^{n-k})}((xI_n-A(G))_i^i)-xd_{(k,1^{n-k-1})}((xI_n-A(G))_i^i).
\end{eqnarray}
Combining (\ref{equ18}) and (\ref{equ19}), we obtain
\begin{eqnarray*}
&&m\cdot d_{k}(xI_n-A(G))\\
&=&\sum_{i=1}^nd_{k}[((xI_n)_{[ii]})-A(G)]+\sum_{{v_sv_t}\in E(G)}d_{k}(xI_n-A(G-{v_sv_t}))\\
&&-\sum_{{v_sv_t}\in E(G)}[a_{st}^2d_{(k-2,1^{n-k})}(A(G)_{s,t}^{s,t})+a_{st}^2d_{(k,1^{n-k-2})}(A(G)_{s,t}^{s,t})]\\
&=&\sum_{i=1}^nd_{k}(xI_n-A(G))-x\sum_{i=1}^n[d_{(k-1,1^{n-k})}((xI_n-A(G))_i^i)+d_{(k,1^{n-k-1})}((xI_n-A(G))_i^i)]\\
&&+\sum_{{v_sv_t}\in E(G)}\Phi'^{A}_{k}(G-{v_sv_t},x)-\sum_{{v_sv_t}\in E(G)}[a_{st}^2d_{(k-2,1^{n-k})}(A(G)_{s,t}^{s,t})+a_{st}^2d_{(k,1^{n-k-2})}(A(G)_{s,t}^{s,t})]\\
&=&n\Phi^{A}_{k}(G,x)-x\Phi'^{A}_{k}(G,x)+\sum_{{v_sv_t}\in E(G)}\Phi_{k}^{A}(G-{v_sv_t},x)\\
&&-\sum_{{v_sv_t}\in E(G)}[a_{st}^2d_{(k-2,1^{n-k})}(A(G)_{s,t}^{s,t})+a_{st}^2d_{(k,1^{n-k-2})}(A(G)_{s,t}^{s,t})]\\
&=&n\Phi^{A}_{k}(G,x)-x\Phi'^{A}_{k}(G,x)+\sum_{{v_sv_t}\in E(G)}\Phi^{A}_k(G-v_sv_t,x)\\
&&-\sum_{{v_sv_t}\in E(G)}[{\Phi}^{{A}({\square\!\square})}_k(G-v_s-v_t,x)+{\Phi}^{{A}{(\scriptsize\rotatebox{90}{$\square\!\square$})}}_k(G-v_s-v_t,x)].
\end{eqnarray*}\qed

By Theorem \ref{thm2}, we note that if $m\neq n$, then the following equation holds
\begin{eqnarray*}
\Phi^{A}_{k}(G,0)=\frac{1}{m-n}\sum_{{v_sv_t}\in E(G)}[\Phi_{k}^{A}(G-v_sv_t,0)-{\Phi}^{{A}({\square\!\square})}_k(G-v_s-v_t,0)+{\Phi}_{k}^{{A}(\scriptsize\rotatebox{90}{$\square\!\square$})}(G-v_s-v_t,0)].
\end{eqnarray*}
Thus the above equation has a unique solution. This implies that Theorem \ref{thm4} holds. Note that the characteristic polynomial and permanental polynomial are special cases of hook immanantal polynomials, whereby we can obtain the following corollaries.

\begin{cor}{\rm (\cite{27})}Let $G$ be a simple graph with vertex set $V(G)=\{v_{1}, v_{2}, \ldots, v_{n}\}$ and edge set $E(G)=\{e_{1}, e_{2}, \ldots, e_{m}\}$. If $m \neq n$, then $\Phi_{n}^{A}(G,x)$ is reconstructed from $\{\Phi_{n}^{A}(G-e,x) | e \in E(G)\} \cup \{\Phi_{n}^{{A}({\square\!\square})}(G-v_s-v_t,x)|v_sv_t \in E(G)\}$.
\end{cor}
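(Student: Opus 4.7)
The plan is to deduce the corollary as a direct specialization of Theorem~\ref{thm4} at $k=n$, where one of the two rim-hook contributions disappears for combinatorial reasons. When $k=n$, the hook partition $(k,1^{n-k}) = (n)$ degenerates to a single row of $n$ boxes, whose Young diagram contains no two vertically adjacent cells. Following the convention used throughout the proofs of Theorem~\ref{thm1} and Theorem~\ref{thm2} (based on the Murnaghan--Nakayama rule, a vertical rim hook of length two can be stripped from a Young diagram only where the rim exhibits two vertically adjacent boxes), the polynomial ${\Phi}^{A(\scriptsize\rotatebox{90}{$\square\!\square$})}_n(G-v_s-v_t,x)$ is identically zero for every edge $v_sv_t\in E(G)$, since the leg $1^{n-k}$ is empty at $k=n$.

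With this vanishing in hand, I would plug $k=n$ into the identity (\ref{equ3}) of Theorem~\ref{thm2}, discarding the vertical-hook terms, to obtain
\begin{eqnarray*}
(m-n)\Phi^{A}_n(G,x) + x\Phi'^{A}_n(G,x) = \sum_{v_sv_t\in E(G)}\bigl[\Phi^A_n(G-v_sv_t,x) - \Phi^{A({\square\!\square})}_n(G-v_s-v_t,x)\bigr].
\end{eqnarray*}
Then, exactly as in the derivation of Theorem~\ref{thm4} from Theorem~\ref{thm2}, the hypothesis $m\neq n$ allows one to solve this relation for $\Phi^A_n(G,x)$: at $x=0$ the scalar $(m-n)$ is invertible and yields $\Phi^A_n(G,0)$, and the remaining coefficients are determined by comparing the coefficient of each power of $x$ on both sides of the identity. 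Since the right-hand side involves only polynomials from the set $\{\Phi_n^A(G-e,x) : e\in E(G)\}\cup\{\Phi_n^{A({\square\!\square})}(G-v_s-v_t,x) : v_sv_t\in E(G)\}$, the permanental polynomial $\Phi_n^A(G,x) = \Phi_n^A(G,x)$ is reconstructed from exactly the data claimed in the corollary.

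The main (and essentially only) obstacle is the careful justification that the vertical rim-hook term vanishes at $k=n$; this is a one-line check about the shape of the hook $(k,1^{n-k})$ and requires no new matrix or graph-theoretic argument. Thus the corollary is simply the $k=n$ shadow of Theorem~\ref{thm4}, recovering the known permanental-polynomial reconstruction result of \cite{27}.
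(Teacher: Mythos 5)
Your proposal is correct and follows essentially the same route as the paper: the corollary is obtained by specializing Theorem~\ref{thm2} and the reconstruction argument of Theorem~\ref{thm4} to $k=n$, where the vertical rim-hook term $d_{(k,1^{n-k-2})}$ vanishes because the hook $(n)$ has an empty leg, leaving exactly the data listed in the corollary. The paper states this specialization without spelling out the vanishing of the vertical term, so your justification of that point fills in a detail the paper leaves implicit rather than departing from its argument.
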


\begin{cor}{\rm (\cite{27})}Let $G$ be a simple graph with vertex set $V(G)=\{v_{1}, v_{2}, \ldots, v_{n}\}$ and edge set $E(G)=\{e_{1}, e_{2}, \ldots, e_{m}\}$. If $m \neq n$, then $\Phi_{1}^{A}(G,x)$ is reconstructed from $\{\Phi_{1}^{A}(G-e,x) | e \in E(G)\} \cup \{\Phi_{1}^{{A}{(\scriptsize\rotatebox{90}{$\square\!\square$})}}(G-v_s-v_t,x)|v_sv_t \in E(G)\}$.
\end{cor}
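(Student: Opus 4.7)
The plan is to derive this corollary as the $k=1$ specialization of Theorem \ref{thm4}. Observe that $\Phi_{1}^{A}(G,x) = d_{1}(xI_{n} - A(G)) = \det(xI_{n} - A(G))$ is the ordinary characteristic polynomial of $G$, so the $k=1$ case of Theorem \ref{thm4} should yield a reconstruction statement involving only characteristic-type polynomials on the right. The strategy is to substitute $k=1$ into the data set of Theorem \ref{thm4} and to show that the ${\Phi}^{A(\square\!\square)}_{1}$ summands vanish identically while the ${\Phi}^{A({\scriptsize\rotatebox{90}{$\square\!\square$}})}_{1}$ summands reduce to ordinary characteristic polynomials of the two-vertex-deleted subgraphs $G-v_{s}-v_{t}$.

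For the first reduction, I would analyze the rim-hook interpretation underlying the two notations. The partition $(k,1^{n-k}) = (1^{n})$ at $k=1$ consists of a single column of height $n$. Removing a horizontal $2$-cell rim hook $\square\!\square$ from this diagram is impossible because no row has length at least $2$; equivalently, the formal symbol $(k-2,1^{n-k}) = (-1,1^{n-1})$ is not a valid partition, and under the standard convention on invalid rim-hook removals, the associated immanant $d_{(-1,1^{n-1})}(xI_{n-2}-A(G-v_{s}-v_{t}))$ is taken to be zero. Hence ${\Phi}^{A(\square\!\square)}_{1}(G-v_{s}-v_{t},x) \equiv 0$ for every edge $v_{s}v_{t}\in E(G)$. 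For the second reduction, removing a vertical $2$-cell rim hook from $(1^{n})$ deletes the bottom two boxes of the column and produces the valid partition $(1^{n-2})$, whose associated irreducible character is the sign character of $S_{n-2}$. Consequently ${\Phi}^{A({\scriptsize\rotatebox{90}{$\square\!\square$}})}_{1}(G-v_{s}-v_{t},x) = d_{(1^{n-2})}(xI_{n-2}-A(G-v_{s}-v_{t})) = \det(xI_{n-2}-A(G-v_{s}-v_{t}))$, which is precisely the characteristic polynomial of the subgraph $G-v_{s}-v_{t}$.

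Substituting these two observations into Theorem \ref{thm4} at $k=1$, the identity (\ref{equ3}) collapses to a relation among characteristic polynomials only: the ${\Phi}^{A(\square\!\square)}_{1}$ contributions disappear, and the ${\Phi}^{A({\scriptsize\rotatebox{90}{$\square\!\square$}})}_{1}$ contributions are themselves characteristic polynomials of two-vertex-deleted subgraphs. Combined with the hypothesis $m\neq n$, which ensures solvability of the reconstruction identity exactly as in the general $k$ case, this expresses $\Phi_{1}^{A}(G,x)$ explicitly in terms of the listed data and proves the corollary.

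The principal obstacle is the convention check for the $(\square\!\square)$ notation at $k=1$: one must verify that the paper's rim-hook reading does indeed assign the value zero when the hook cannot be removed, rather than leaving the symbol undefined. Once this convention is pinned down, the corollary follows by direct specialization of Theorem \ref{thm4} without any further calculation beyond the character-theoretic identification of $d_{(1^{n-2})}$ with the determinant.
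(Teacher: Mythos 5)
Your proposal is correct and takes essentially the same route as the paper, which obtains this corollary by simply specializing Theorem \ref{thm4} (equivalently, identity (\ref{equ3})) to $k=1$ with the one-line remark that the characteristic polynomial is a special case of the hook immanantal polynomial. Your explicit verification that the horizontal-domino term $\Phi_1^{A(\square\!\square)}$ vanishes because $(k-2,1^{n-k})=(-1,1^{n-1})$ is not a partition, while the vertical-domino term equals $d_{(1^{n-2})}(xI_{n-2}-A(G-v_s-v_t))=\det(xI_{n-2}-A(G-v_s-v_t))$, is exactly the convention check the paper leaves implicit.
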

\section{Summary}
In this article, we characterize two identities on hook immanantal polynomial of  combinatorial matrix(adjacency matrix, (signless) Laplacian matrix). As applications, we demonstrate that  the hook immanantal polynomial of  combinatorial matrix of a digraph can be  reconstructed. And we also emonstrate that  the hook immanantal polynomial of  adjacency matrix of a graph can be  reconstructed.  A natural problem: can be  reconstructed on the hook immanantal polynomial of  (signless) Laplacian matrix of a graph?
 Zhang et al.\cite{27} have proved that $\Phi_{1}^{L}(G,x)$ is reconstructed from $\{\Phi_{1}^{L}(G-e,x) | e \in E(G)\}$. However, if $k\neq1$, this property remains unknown for hook immanantal polynomials of a graph $G$. In the future, we can continue to explore the aforementioned unsolved questions.

\noindent{\bf Data Availability}\\
{No data were used to support this study.}


\begin{thebibliography}{99}

\bibitem{1}
P. B\"{u}rgisser, The computational complexity of immanants, \textit{SIAM J. Computing} 30 (2000) 1023--1040.

\bibitem{15}
P. Botti, R. Merris, Almost all trees share a complete set of immanantal polynomials, \textit{J. Graph Gheory} 17 (1993) 467-476.

\bibitem{20}
D. Bolognini, P. Sentinelli, Immanant varieties, \textit{Linear Algebra Appl.} 682 (2024) 164-190.

\bibitem{17}
G. Cash, Immanants and immanantal polynomials of chemical graphs, \textit{J. Chem. Inform. Comput. Sci.} 43 (2003) 1942-1946.

\bibitem{33}
D. Cvetkovi\'{c}, M. Doob, I. Gutman, A. Torga$\check{\rm s}$ev, Recent results in the theory of graph spectra, North-holland, Amsterdam, 1988.

\bibitem{11}
O. Chan, T. Lam, Immanant inequalities for Laplacians of trees, \textit{SIAM J. Matrix Anal. Appl.} 21 (1999) 129-144.

\bibitem{12}
O. Chan, T. Lam, Hook immanantal inequalities for Laplacians of trees, \textit{Linear Algebra Appl.} 261 (1997) 23-47.

\bibitem{28}
R. Forman, Finite-type invariants for graphs and graph reconstructions, \textit{Adv. Math.} 186 (2004) 181-228.

\bibitem{29}
C. Godsil, Reconstructing graphs from their k-edge deleted subgraphs, \textit{J. Comb. Theory. Ser. B} 43 (1987) 360-363.

\bibitem{30}
C. Godsil, B. McKay, Spectral conditions for the reconstructibility of a graph, \textit{J. Comb. Theory. Ser. B} 30 (1981) 285-289.

\bibitem{3}
I. Goulden, D. Jackson, Immanants of combinatorial matrices, \textit{J. Algebra} 148 (1992) 305--324.

\bibitem{25}
I. Gutman, D. Cvetkovi\'{c}, The reconstruction problem for characteristic polynomials of graphs, \textit{Univ. Beograd, Publ. Elektrotehn. Fak. Ser. Mat. Fiz.} 541 (1975) 45-48.

\bibitem{2}
M. Haiman, Hecke algebra characters and immanant conjectures, \textit{J. Amer. Math. Soc.} 6 (1993) 569--595.

\bibitem{22}
F. Harary, On the reconstruction of a graph from a collection of subgraphs, \textit{Theory of Graphs and Its Applications} (M. Fiedler ED.), Czechoslovak Academy of Sciences, Prague/Academic Press, New York, 1965.

\bibitem{23}
F. Harary, B. Manvel, The reconstruction conjecture for labelled graphs, \textit{Combinatorial Structures and Their Applications} (R.K. Guy ED.), Gordon \& Breach, New York, 1969.

\bibitem{31}
A. Kostochka, M. Nahvi, D. West, D. Zirlin, 3-regular graphs are 2-reconstructible, \textit{Eur. J. Comb.} 91 (2021) 103216.

\bibitem{32}
L. Lov\'{a}sz, A note on the line reconstruction problem, \textit{J. Comb. Theory. Ser. B} 13 (1972) 309-310.

\bibitem{5}
T. Lam, P. Pylyavskyy, Temperley-Lieb pfaffinants and Schur Q-positivity conjectures, \textit{Adv.  Math.} 218 (2008) 1654--1684.

\bibitem{7}
C. Li, A. Zaharia, Induced operators on symmetry classes of tensors, \textit{Trans. Am. Math. Soc.}  354 (2002) 807--836.

\bibitem{8}
R. Merris, Immanantal invariants of graphs, \textit{Linear Algebra Appl.} 401 (2005) 67-75.

\bibitem{9}
R. Merris, Laplacian matrices of graphs: a survey, \textit{Linear Algebra Appl.} 197 (1994) 143-176.

\bibitem{10}
R. Merris, Single-hook characters and Hamiltonian circuits, \textit{Linear Multilinear Algebra} 14 (1983) 21-35.

\bibitem{19}
R. Merris, The second immanantal polynomial and the centroid of a graph, \textit{SIAM J. Algebraic Discrete Methods} 7 (1986) 484-503.

\bibitem{6}
M. Marcus, H. Minc, Generalized matrix functions, \textit{Trans. Am. Math. Soc.} 116 (1965) 316--329.

\bibitem{35}
R. Merris, W.Watkins, Inequalities and identities for generalized matrix functions, \textit{Linear Algebra Appl.} 64 (1985) 223-242.

\bibitem{14}
M. Nagar, S. Sivasubramanian, Hook immanantal and Hadamard inequalities for q-Laplacians of trees, \textit{Linear Algebra Appl.} 523 (2017) 131-151.

\bibitem{18}
 M. Nagar, S. Sivasubramanian, Laplacian immanantal polynomials and the GTS poset on trees, \textit{Linear Algebra Appl.} 561 (2019) 1-23.

\bibitem{24}
A. Schwenk, Spectral reconstruction problems, \textit{Ann. New York Acad. Sci.} 328 (1979) 183-189.

\bibitem{36}
B. Sagan, The symmetric group: representations, combinatorial algorithms, and symmetric functions, Wadsworth \& Books/Cole, Pacific Grove, Calif. 1991.

\bibitem{4}
J. Stembridge, Some conjectures for immanant, \textit{Can. J. Math.} 44 (1992) 1079--1099.

\bibitem{21}
S. Ulam, A collection of mathematical problems, Wiley (Interscience), New York, 1960.

\bibitem{34}
T. Wu, Y. YU, L. Feng, X. Gao,On the second immanantal polynomials of graphs, \textit{Discrete Math.} 347 (2024) 114105.

\bibitem{16}
G. Yu, H. Qu, The coefficients of the immanantal polynomial, \textit{Appl. Math. Comput.} 339 (2018) 38-44.

\bibitem{26}
J. Zhang, X. Jin, W. Yan, On the edge reconstruction of six digraph polynomials, 2023, https://arxiv.org/abs/2305.07913v1.

\bibitem{27}
J. Zhang, X. Jin, W. Yan, Q. Liu, On the edge reconstruction of the characteristic and permanental polynomials of a simple graph, \textit{Discrete Math.} 347 (2024) 114063.






\end{thebibliography}
\end{document}